\title{Greedy Morse Matching and Discrete Smoothness}
\author[1]{Joao Lagoas}
\author[2]{Thomas Lewiner}
\author[3]{Tiago Novello}
\author[4]{Joao Paixao}
\affil[1]{Colégio Pedro II, Rio de Janeiro, Brazil\\
  \texttt{jaolagoas@gmail.com}}
\affil[2]{Gamma, The Boston Consulting Group, Paris, France\\
  \texttt{thomas@lewiner.org}}
\affil[3]{Pontifícia Universidade Católica do Rio de Janeiro, Brazil\\
  \texttt{tiago.novello@mat.puc-rio.br}}
\affil[4]{Universidade Federal do Rio de Janeiro, Brazil\\
  \texttt{jpaixao@dcc.ufrj.br}}
\authorrunning{J.Lagoas et al.} 
\subjclass{G.2.1 Combinatorial algorithms - F.2.2 Geometrical problems and computations}
\keywords{Discrete Morse theory,
Greedy matching,
CAT(0) complex,
Geometric sampling, and
Computational topology}
\begin{document}

\maketitle

\begin{abstract}
Discrete Morse theory emerged as an essential tool for computational geometry and topology. Its core structures are discrete gradient fields, defined as acyclic matchings on a complex $C$, from which topological and geometrical information of $C$ can be efficiently computed, in particular its homotopy, homology, or Morse-Smale decomposition.

On the geometrical side, given a function $f$ sampled on $C$, it is possible to derive a discrete gradient field that mimics the dynamics of $f$. Many such constructions are based on some variant of a greedy pairing of adjacent cells, given an appropriate weighting. However, proving that the dynamics of $f$ is correctly captured by this process is usually intricate. This work introduces a notion of discrete smoothness of the pair $(f,C)$, as a minimal sampling condition to ensure that the discrete gradient is geometrically faithful to $f$.
More precisely, a discrete gradient construction from a function $f$ on a simplicial complex $C$ of arbitrary dimension is studied, leading to theoretical guarantees prior to the discrete smoothness assumption. Those results are then extended and completed for the smooth case.

On the topological side, given an appropriate function $f$, greedy matchings can also be used to construct optimal discrete gradient field to provide topological information of a complex $C$. As an application, a purely combinatorial proof that all CAT(0) cube complexes are collapsible is given.
\end{abstract}

\section{Introduction} \label{sec:introduction}
Discrete Morse theory, as introduced by Forman~\cite{Forman95,Forman98}, provides foundations for tools in both computational topology and topological data analysis. This ubiquity comes from its combinatorial nature that translates to algorithms in a straightforward manner. In particular its core structure, called discrete gradient vector field, is defined through combinatorial matching, which allows to leverage a large literature of efficient algorithms.

This combinatorial nature lead to several applications of topological data analysis,
where a common goal is to analyse the dynamics of a scalar function $f$ defined on the vertices of a cell complex $C$. For such setup, greedy matching have been widely used to construct a discrete gradient field on $C$ out of $f$~\cite{babson2005discrete,King_Hnudson_Mramor,lewiner2004applications,Gyulassy,robins2011theory}\footnote{Non-greedy matching approaches have also been devised, for example fusing progressive simplification~\cite{reininghaus2011fast} or locally enforced properties~\cite{robins2011theory,adiprasito2011metric}.}. The general principle of those greedy approaches relies on matching adjacent cells, weighing valid pairs aligned with the direction of steepest descent of $f$. This leads to efficient, scalable and generalizable algorithms.

However, while Forman's theory ensures the topological coherence of the resulting discrete gradient field, the later is not always guaranteed to be faithful to $f$: its critical elements are not necessarily facets incident to piecewise-linear critical vertices, as defined by Banchoff~\cite{banchoff1967critical}, even in the simplicial case. For specific cases, when the weights guiding the greedy matching are given by a lexicographic ordering~\cite{babson2005discrete} or a linear interpolation of $f$~\cite{lewiner2013critical},proofs of the faithfulness of the construction have been given. However those proofs are very intricate and require initial barycentric subdivisions~\cite{babson2005discrete,lewiner2013critical} to ensure enough regularity of the sampling of the function $f$.

This work pins down the notion of regularity needed by greedy constructions, which we call \emph{discrete smoothness}. This leads to simpler and broader proofs of the faithfulness of a greedy discrete gradient field construction. Moreover, we show that barycentric subdivisions ensures discrete smoothness, the proposed result generalizes previous ones~\cite{babson2005discrete,lewiner2013critical}.

More precisely, given a simplicial complex $C$ and a scalar function $f$ sampled at its vertices, we introduce the notion of discrete smoothness of the pair $(f,C)$, as a minimal sampling condition to ensure that a greedy discrete gradient field can be faithful to $f$. The condition is necessary and sufficient for the critical elements to be well positioned by the greedy construction. 

We structure the following study around a greedy algorithm where weights of matching adjacent cells are taken from lexicographic ordering of their vertices according to $f$, following Babson and Hersh's work~\cite{babson2005discrete}. When applied on a smooth pair $(f,C)$ of any finite dimension, this construction leads to isolated critical elements, similarly to smooth Morse functions.

The simplicity of greedy matchings have also been useful to pure computational topology~\cite{forman_user, adiprasito2011metric}. As an application of our theoretical approach to greedy matchings, we provide a purely combinatorial proof of the result of Adiprasito and Benedetti~\cite{adiprasito2011metric} that all CAT(0) cubical complexes are collapsible.







\section{Greedy Matching} \label{sec:greedy_algorithm}

Most concepts of graph theory and their notations have been derived from \cite{Bondy_and_Murty}.

A graph $ G $ is an ordered pair $ (V(G), E(G)) $ consisting of a set of vertices $ V(G) $ and a set of edges $ E(G) $ disjoint of $ V(G) $. Each edge of $ G $ is associated with an unordered pair of vertices of $ G $. For $ x, y \in {V (G)} $, an edge $ e = \{x, y \} \in {E (G)} $ is denoted by $ xy $ or $ yx $ without distinction. We say that $ x $ and $ y $ are \textit{extremes} of the edge $e$ and that $ e $ is \textit{incident} to vertices $ x $ and $y$.

A path $P$ in a graph $G$ is a sequence of vertices denoted by $ v_1, v_2, \dots, v_n $ such that from each of its vertices there is an edge to the next vertex in the sequence. In addition, the edges of a graph may have an associated weight. In a weighted graph, every edge is associated with a real and finite number which we will call \textit{weight}. We will define $ \overline{e}$ or $ \overline{ \{x, y\} }$ as the weight associated with the edge $ e = \{x, y \} \in {E}$.

A graph $H$ is a subgraph of $G$ denoted by $ H \subseteq{G}$ if $V(H) \subseteq{V (G)}$ and  $E(H)\subseteq{E(G)}$. We say that a subgraph $ H $ is an induced subgraph of $ G $ and denoted by $ G [S] $, where $ S $ is a subset of vertices of $ G $, if for any pair of vertices $ x $ and $ y $ of $ H $, $ xy $ is an edge of $ H $ if and only if $ xy $ is an edge of $ G $. This means that $ H $ is an induced subgraph of $ G $ if it has all the edges that are in $ G $ on the same set of vertices.

A matching $ M $ in a graph $ G = (V, E) $ is a subset of edges $ M \subseteq {E (G)} $ such that every vertex is contained in at most one edge of $ M $ . Edges in $ M $ are called \textit{matched edges} and the extreme vertices to the matching edges are called \textit{saturated vertices}. Similarly, edges that are not in $ M $ are said to be \textit{unmatched} and the vertices where all the edges are not matched are called \textit{unsaturated vertices}.

An alternating path $ P $ in a matching $ M $, defined in a graph $ G $, is a path whose edges alternate between those that are in $ M $ and those that are not in $ M $. $ P $ can therefore be defined as $ P = \{e_1, e_2, e_3, \dots, e_n\} $ where $ n $ is the number of edges in the path, that is, its length. Therefore, if $ e_i \in {M} $ then $ e_{i + 1} \notin M$ for all $i = 1,2,3, \dots, n-1 $.

A digraph $ D $ is an ordered pair $ (N (D), A (D)) $ consisting of a non-empty set of nodes $ N (D) $ and a set of arcs $ A (D) $ disjoint of $ N (D) $. At each arc of $ D $ an ordered pair of nodes is associated. A digraph is nothing more than a directed graph; the pairs of nodes that make up an arc are now ordered. For $ u, v \in N $, an arc $ a = \{u, v \} \in {A} $ is denoted by $ uv $ and implies that $ a $ is directed from $ u $ to $ v $. As in graphs, a digraph can be weighted, with all its arcs associated with a finite and real number.

The main algorithm from matching theory that we will use is simply a greedy matching in graphs. We rewrite its steps in pseudocode form as shown in the next algorithm.

\label{alg:greedy_matching}
\begin{algorithm}[H] 

  \textbf{Input:} \text{Weighted graph $G=(E, V)$ } 
  \textbf{Output:} \text{Set of matched edges $M$}

  \caption{Greedy matching}
  \begin{algorithmic}
    \State $M\leftarrow \emptyset$ \Comment{An empty set of edges}
    \While{$E\neq\emptyset$}\Comment{There are still edges in the graph}
      \State $E_{min}\leftarrow \text{argmin}_{e\in E(G)} \overline{e}$
      \State $M\leftarrow M\cup{E_{min}}$
      \State $G\leftarrow G$ after removal of the extreme vertices to all the edges in $E_{min}$
    \EndWhile
    \State \textbf{return} $M$\Comment{The matching}
  \end{algorithmic}
\end{algorithm}

At each iteration, the algorithm selects the edges with the lowest weights in the graph by creating the set of minimum edges $E_{min}$, adds them to the matching $M$ and finally removes the vertices that are extreme from these edges and, therefore, their incident edges. Thus, at each iteration an induced subgraph of $G$ is created. These steps of selecting edges, matching, and removing vertices repeat until all edges in $E$ have been removed.



At this moment it is worth noting that $M$ is only a matching if the intersecting edges (neighboring edges) of the graph are tie free. This means that in $G$ there is no edge $e_1$ adjacent to edge $e_2$ such that $\overline{e}_1 =\overline{e}_2$. It is fundamental to understand this sufficient condition because it will be used throughout the paper.

We now propose and define one notation that will be useful in order to simplify the lemmas and proofs related to the theoretical guarantees of this algorithm.

First, it is necessary to remember that we denote the $-:e\rightarrow\mathbb{R}$ as function evaluated on an edge $e$ as the weight of that edge $\overline{e}$. We are ratifying this because we will define another function, also named as $-$, but defined in the vertices of the graph induced by a matching $M$.

Thus, for a matching $M$ in $G$, the function $-:V(G)\rightarrow\mathbb{R}\cup\infty$ is such that 
$$
\overline{v}= \left
\{
\begin{matrix}
\overline{e},  \mbox{ if } v \mbox{ is saturated by } e \mbox{ in } M 
\\
\infty , \mbox{ if } v \mbox{ is not saturated in } M
\end{matrix}\right.
$$

We will call the function $-$ therefore by \textit {saturation function}. The saturation function is well defined because $M$ is a matching, so each vertex can only be saturated by at most one edge. In addition, the matching type will always be implied in context.

Now we will define $V_a = \{v \in {V (G)} | \overline{v} \geq{a} \}$ as the set of vertices of $G$ whose saturation function is greater than or equal to the real number $a$. Then consider $G_a = G[V_a]$ as the induced subgraph of $G$ for the set of vertices $V_a$. This induced subgraph contains only the vertices whose saturation function is greater than or equal to the real number $a$. We denote therefore the set of vertices and the set of edges of $G_a$ as $V(G_a) = V_a$ and $E(G_a) = E_a$, respectively. 

By definition, $G_a$ is a subgraph of $G$ where the minimum weight of its edges is $a$. This notation is interesting because it identifies the subgraph of $G$ at the moment that the edges of a certain weight are selected in the matching step of the algorithm. In other words, we can recognize the iteration in which the algorithm is executing when we refer to $G_a$.

With this in mind, it is also easy to note that if a weight edge $e$ exists in the graph $G_{\overline{e}}$ then surely that edge is matched in $M$. The next lemma formalizes this idea.

\begin{lemma} \label{lem:is_matched}
Let $M$ be a greedy matching in a graph $G$. If $e \in G_{\overline{e}}(E)$, then $e\in M$.
\end{lemma}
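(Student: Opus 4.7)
The plan is to unpack the hypothesis \(e \in E(G_{\overline{e}})\) through the definition of the saturation function, then track what the greedy algorithm does iteration by iteration in terms of the weight parameter.

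First I would fix an edge $e = xy$ satisfying the hypothesis. By the definition of $G_{\overline{e}}$, both endpoints $x$ and $y$ lie in $V_{\overline{e}}$, so $\overline{x}\geq \overline{e}$ and $\overline{y}\geq \overline{e}$. I would then read this through the two cases of the saturation function: either a vertex is unsaturated (saturation value $\infty$, automatically $\geq \overline{e}$) or it is saturated by an edge whose weight is at least $\overline{e}$. In either case, neither $x$ nor $y$ is matched by any edge of weight strictly less than $\overline{e}$ in the final matching $M$.

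Next I would relate this to the dynamics of the algorithm. A vertex is removed from the working subgraph at a particular iteration if and only if it is matched at that iteration, and the weight of that matching edge becomes its saturation value. Hence $x$ and $y$, not being matched by any edge of weight $<\overline{e}$, survive every iteration whose selected weight is $<\overline{e}$. Consequently, at the iteration in which the algorithm processes edges of weight $\overline{e}$, both $x$ and $y$ are still present in the working subgraph, and therefore the edge $e$ itself still belongs to the working subgraph, which is exactly $G_{\overline{e}}$.

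Finally I would conclude: since all edges of strictly smaller weight have been removed in earlier iterations (either selected into $M$ or destroyed with their matched endpoints), the minimum weight in the current subgraph is $\overline{e}$, so $e \in E_{min}$, and therefore $e \in M$. The only subtle point, and the one I expect to be the main bookkeeping obstacle, is justifying rigorously the equivalence between ``$v$ has final saturation value $\geq \overline{e}$'' and ``$v$ is not removed during any iteration processing a weight $<\overline{e}$''; this is essentially a matter of observing that the saturation value of a vertex, once matched, is frozen at the weight of the matching iteration, and that no later iteration can lower it. The tie-free assumption stated just before the lemma is implicitly used to guarantee that the selected edges at each iteration form a valid partial matching and can all be added to $M$ together.
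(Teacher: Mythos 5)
Your proof is correct and follows exactly the reasoning the paper itself relies on: identifying $G_{\overline{e}}$ with the working subgraph at the iteration whose minimum weight is $\overline{e}$, so that $e$ survives to that iteration and lands in $E_{min}$. The paper actually states this lemma without a formal proof (only the informal remark that $G_a$ ``identifies the subgraph of $G$ at the moment that the edges of a certain weight are selected''), so your write-up supplies precisely the bookkeeping --- that a vertex with saturation value $\geq \overline{e}$ is not removed at any iteration of weight $<\overline{e}$ --- that the paper leaves implicit.
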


The greedy matching has some properties in relation to its set of edges $M$. These properties are strongly related to a particular type of alternating path and will be useful in assisting various proofs on the discrete gradient further along. For that, we need to define some particular cases of alternating paths. Here is the definition of what we call the \textit {maximal alternating path} (see examples in Figure~\ref{fig:maximal_alternating_path}).

\theoremstyle{definition}
\begin{definition} [Maximal Alternating Path] \label{def:maximal_alternating_path}
We denote as \textit{maximal alternating path}, any alternating path $P$ such that if a vertex is saturated, then it is saturated by an edge in $M$.
\end{definition}

\begin{figure}[ht] 
    \centering
    \includegraphics[scale=0.35]{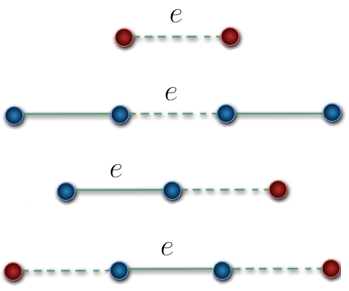}
    \caption{Examples of maximal alternating paths. Red and blue vertices are unsaturated and saturated, respectively. Solid edges are matched and dotted edges are not.}
    \label{fig:maximal_alternating_path}
\end{figure}

An important result of the greedy algorithm is the notion of occurrence of edges of minimum weight on maximal alternating paths. By means of the Theorem~\ref{thm:if_map_then_matched}, we can say that for any maximal alternating path, the edges of minimum weight will be matched.

\begin{theorem} \label{thm:if_map_then_matched}
If $P$ is a maximal alternating path, then $E_{min}(P) \subseteq {M} $.
\end{theorem}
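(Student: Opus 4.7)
The plan is to reduce Theorem~\ref{thm:if_map_then_matched} to Lemma~\ref{lem:is_matched}. I would fix an arbitrary edge $e = uv \in E_{min}(P)$, set $w = \overline{e}$, and aim to verify that $e$ still lies in the induced subgraph $G_w$; Lemma~\ref{lem:is_matched} then gives $e \in M$ immediately, and arbitrariness of $e$ yields $E_{min}(P) \subseteq M$.

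The key step is therefore to certify both endpoints of $e$ in $V_w$, that is, $\overline{u} \geq w$ and $\overline{v} \geq w$. By symmetry I only need to treat $u$, and I would split into two cases. If $u$ is unsaturated, then $\overline{u} = \infty$ by the very definition of the saturation function, and the inequality is automatic. If $u$ is saturated, then the maximality hypothesis on $P$ (Definition~\ref{def:maximal_alternating_path}) forces the matched edge $e'$ that saturates $u$ to be an edge of $P$ itself. Since $e$ realizes the minimum weight among the edges of $P$, this gives $\overline{e'} \geq w$, and hence $\overline{u} = \overline{e'} \geq w$ by definition of the saturation function. Combining this with the symmetric statement for $v$, both endpoints lie in $V_w$, so $e \in E(G_w)$ and the lemma applies.

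The main obstacle I foresee is a careful use of the maximality condition: it must be read as requiring the matched edge saturating any saturated vertex of $P$ to actually belong to $P$, not merely to $M$ in general. This is exactly what rules out the dangerous scenario in which a saturating edge sitting outside $P$ has weight strictly below $w$ and thereby removes an endpoint from $V_w$. Once this reading is fixed, the rest of the argument is purely notational, transporting the minimality of $\overline{e}$ through the saturation function into membership in the subgraph $G_w$.
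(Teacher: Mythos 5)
Your argument is correct and follows essentially the same route as the paper: establish that the endpoints of a minimum-weight edge $e$ of $P$ have saturation value at least $\overline{e}$ (unsaturated vertices give $\infty$, saturated ones are covered by an edge of $P$ by maximality), conclude $e \in E(G_{\overline{e}})$, and invoke Lemma~\ref{lem:is_matched}. The paper merely packages the same steps into Lemmas~\ref{lem:v_geq_e} and~\ref{lem:H_in_Ga}, verifying the bound for all vertices of $P$ rather than just the two endpoints of $e$; your reading of the maximality condition matches the one the paper uses.
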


It is important to note that a maximal alternating path $P$ such that $|P| = 1$ and its vertices are unsaturated, deserves some attention. Note that this type of construction never happens in a greedy matching. Although not so intuitive, note that this is verifiable by the last theorem.


\section{Preliminaries for the Discrete Gradient Field}
\label{sec:discrete_gradient_field}

\theoremstyle{definition}
\begin{definition} [Simplicial Complex] \label{def:simplicial_complex}

A finite simplicial complex is a set of vertices $ V $ together with a set $ \Delta $ of subsets of $ V $, such that $ \Delta $ must satisfy some properties:

\begin{enumerate}
  \item $V\subset{\Delta}$
  \item if $\tau \in \Delta$ and $\sigma \subset \tau$, then $\sigma \in \Delta$
\end{enumerate}
\end{definition}

We will refer to the simplicial complex as $ \Delta $. The elements of $ \Delta $ are called simplexes. A simplex $ \tau \in \Delta $ is said of \textit{dimension} $ p $, denoted by $ dim(\tau) = p $, if $ \tau $ contains $ p + 1 $ vertices.

A simplex $ \sigma $ is also said to be a \textit{facet} of another $\tau$ simplex, denoted by $ \sigma \prec \tau $, if $ \sigma \subset \tau $ and $dim(\sigma)=dim(\tau)-1$. We finally say that the dimension of $ \Delta $ is the largest dimension of these simplexes.

Another very common way to represent a simplicial complex is in terms of a digraph. This structure is called the \textit{Hasse diagram} and is defined below \cite{Chari}.

\theoremstyle{definition}
\begin{definition} [Hasse Diagram] \label{def:hasse_diagram}
Given a simplicial complex $\Delta$, its Hasse diagram $H$ is the directed graph $D$ where the set of nodes is the set of simplexes of $\Delta$ and the set of arcs in $H$ is composed of all pairs such that $\{\sigma,\tau\}\in H$ if and only if $\sigma\prec\tau$.
\end{definition}

The Hasse diagram, therefore, acts as a kind of dictionary between the concepts of graphs and simplicial complexes. It is through its structure that we can understand how the discrete gradient is constructed.

\theoremstyle{definition}
\begin{definition} [Discrete Vector Field] \label{def:discrete_vector_field}
A discrete vector field $\mathcal{V}$ in $\Delta$ is a collection of simplexes pairs $\{\sigma,\tau\}$ in $\Delta$ with $\sigma\prec\tau$, such that each simplex is at most a pair of $\mathcal{V}$.
\end{definition}

We write $\sigma\rightarrow\tau$ if $\{\sigma,\tau\}\in\mathcal{V}$, that is, if the arc $\{\sigma,\tau\}$ is matched,  and $\sigma\nrightarrow\tau$ otherwise. 

Unsaturated nodes and alternate paths in the Hasse diagram are also defined in a particular way.

\theoremstyle{definition}
\begin{definition} [Critical Simplex]\label{def:critical_simplex}
A simplex $\sigma$ is said \textit{critical} for a discrete vector field if it does not belong to any pair of $\mathcal{V}$.
\end{definition}

\theoremstyle{definition}
\begin{definition} [$\mathcal{V}-path$] \label{def:v_path}
Given a discrete vector field $\mathcal{V}$ in a simplicial complex $\Delta$, a $\mathcal{V}-path$ is a sequence of simplexes $\blacktriangleleft\sigma_0\tau_0\sigma_1\tau_1\dots\sigma_{n-1}\tau_{n-1}\sigma_n \blacktriangleright$ such that for $0 \leq i \leq n-1$ we have that $\sigma_i\rightarrow\tau_i$ and $\tau_i\succ\sigma_{i+1}\neq\sigma_i$.
\end{definition}

A $\mathcal{V}-path$ is then an alternating path in the Hasse diagram with the constraint that all dimensions of the simplexes alternate between $p$ or $p+1$, with $p=dim(\sigma_n)$.

If a path is such that $n>0$ and $\sigma_n=\sigma_0$, we call it \textit{non-trivial closed}. We say that $\mathcal{V}$ is a discrete gradient field if it does not contain any $\mathcal{V}-path$ non-trivial closed.

\theoremstyle{definition}
\begin{definition} [Discrete Gradient Field] \label{def:discrete_gradient_field}
A discrete vector field $\mathcal{V}$ is a discrete gradient field if and only it does not contain non-trivial closed $\mathcal{V}-path$.
\end{definition}

So, given the simplicial complex $ \Delta $ associated with a discrete object, the construction of the gradient is given by a matching in its Hasse diagram.

\section{Greedy Discrete Gradient Field}

As we mentioned in the introduction, we would use the discrete gradient as a way to study a function $f$ evaluated at the vertices of a discrete object represented by $\Delta$. We also mention that there are several algorithms that propose methods of construction of the discrete gradient field. 

The vast majority of those algorithms are summed up in greedy matchings in weighted digraphs together with a choice of inducing weights of $f$ to the arcs of $H$. There are several ways to induce the weight of the function $f$ to the arcs (and thereby make $H$ a weighted digraph) \cite{Lewiner_1, King_Hnudson_Mramor, Gyulassy, Reininghaus_Lowen_Hotz, Paixao, lewiner2013critical}. In this work, we are following the choice of induction proposed in \cite{Paixao, lewiner2013critical}.

We need a total order on the vertices of the simplicial complex. Given a injective function $f$ defined at the vertices of the simplicial complex. Then, we define $f(\sigma)$ as the set of weights associated with the vertices of $\sigma$, that is, $f(\sigma)=\{f(v)|\forall v\in{\sigma}\}$.

We can extend the total order of the vertices to a total order of the simplicies with a lexicographic order.

\begin{definition}[Lexicographic Order]\label{def:lex_order}
If $\sigma_1 \subseteq \sigma_2$, then $\min f(\sigma_1) > \max f(\sigma_2) \setminus f(\sigma_1) \iff f(\sigma_1) > f(\sigma_2)$. If $\sigma_1 \not\subseteq \sigma_2$, then $\max f(\sigma_1) \Delta f(\sigma_2) \in f(\sigma_1) \iff f(\sigma_1) > f(\sigma_2)$, where $\setminus$ represents the subtraction operation in sets and $\Delta$ represents the symmetric difference.
\end{definition}

With this total order on the simplicies, we can now define the arc weights of the Hasse diagram.

\theoremstyle{definition}
\begin{definition}[Arc Weights] \label{def:arc_weight}
Given a function $f$ defined at the vertices of a simplicial complex, the weight induced by it in the arcs $\{\sigma,\tau\}$ of its Hasse diagram $H$ is given by $\overline{\{\sigma,\tau\}} = f(\tau\setminus\sigma)$.
\end{definition}

Observe that the weight of the arc $\{\sigma,\tau\}$, for simplicial complexes, is the value of the function $f$ at the vertex $v=\tau\setminus\sigma$. Since it is a simplicial complex, $\sigma$ is facet of $\tau$. Thus, the operation $\tau\setminus\sigma$ always results in a vertex $v$ where $v\in{\tau}$ and $v \notin{\sigma}$. Therefore we have that $\overline{\{\sigma,\tau\}} = f(v)$.

By an abuse of notation, we sometimes drop $f$, when the context is clear.

Once we have defined how to assign weights to the Hasse diagram arcs through a function evaluated at the vertices of a discrete object and how the greedy matching works, we can state in more detail how the discrete gradient is constructed (Algorithm~\ref{alg:discrete_gradient_field_construction_detail}).

\begin{algorithm}[H]

  \textbf{Input:} \text{Simplicial complex $\Delta$ and $f:V(\Delta)\rightarrow\mathbb{R}$} 
  \textbf{Output:} \text{Discrete gradient field $\mathcal{V}$}
  
  \caption{Discrete gradient field construction} \label{alg:discrete_gradient_field_construction_detail}
  
  \begin{algorithmic}[1]
  \State create Hasse diagram from $\Delta$
  \State induce weights on arcs of $H$ throught $f$
  \State  $\mathcal{V}$ = weighted matching algorithm \footnote{The algorithm should create a matching that hasn't closed trivial $\mathcal{V}-path$}
  \State \textbf{return} $\mathcal{V}$
  \end{algorithmic}

\end{algorithm}

\section{Geometric Faithfulness of Greedy Discrete Gradient Field}

First we will explain why the discrete vector field $\mathcal{V}$ associated with the greedy matching is in fact a discrete gradient field (which is equivalent to the smooth gradient in the sense of not having closed orbits). Next we will enunciate another geometric notion for the same: a notion of decreasing flow. The first result is already known in the literature \cite{Paixao, lewiner2013critical} for the greedy algorithm, in particular, and other constructs; we just simplify the proof. The second is a new result. It is also worth mentioning that all the results are valid for any dimension.

To show that the discrete vector field $\mathcal{V}$ associated with the greedy matching is in fact a discrete gradient field, it is the same as showing that such algorithm does not generate non-trivial closed $\mathcal{V}-path$  (Definition~\ref{def:discrete_gradient_field}). For this, we will propose some notations that will make the proof simple. The following notations and lemmas are our contributions to simplify evidence in the literature.

Considering a $\mathcal{V}-path$ denoted by $$\blacktriangleleft\sigma_0\tau_0\sigma_1\tau_1\dots\sigma_{n-1}\tau_{n-1}\sigma_n\blacktriangleright$$ we can write it as an alternating path $$P=\{\sigma_0,\tau_0\}\{\sigma_1,\tau_0\}\{\sigma_1,\tau_1\}\{\sigma_2,\tau_1\}\dots\{\sigma_{n-1},\tau_{n-1}\}\{\sigma_n,\tau_{n-1}\}.$$ In this way, according to the indices, we have that $\{\sigma_i,\tau_i\}\in{M}$ and $\{\sigma_{i-1},\tau_i\}\notin{M}$.

\begin{theorem} \label{theo:is_gradient}
The discrete vector field $\mathcal{V}$ associated with the greedy matching $M$ in the Hasse diagram $H$ is a discrete gradient field.
\end{theorem}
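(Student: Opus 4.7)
The plan is to argue by contradiction. Suppose $\mathcal{V}$ contained a non-trivial closed $\mathcal{V}$-path
\[
\blacktriangleleft \sigma_0\tau_0\sigma_1\tau_1\cdots\sigma_{n-1}\tau_{n-1}\sigma_n \blacktriangleright
\]
with $\sigma_n=\sigma_0$. First I would rewrite it in its alternating-path form on the Hasse diagram, as spelled out just before the statement:
\[
P=\{\sigma_0,\tau_0\}\{\sigma_1,\tau_0\}\{\sigma_1,\tau_1\}\{\sigma_2,\tau_1\}\cdots\{\sigma_{n-1},\tau_{n-1}\}\{\sigma_n,\tau_{n-1}\},
\]
so that the matched arcs are exactly the $\{\sigma_i,\tau_i\}\in M$ and the unmatched arcs are exactly the $\{\sigma_{i+1},\tau_i\}\notin M$.

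Next I would extract one strict inequality per step from the greedy rule. For each $i$, the arcs $\{\sigma_i,\tau_i\}$ and $\{\sigma_{i+1},\tau_i\}$ are incident to the same node $\tau_i$. Since $\tau_i$ gets saturated by the first of these arcs during the greedy run, the tie-free hypothesis on adjacent arcs forces $\overline{\{\sigma_i,\tau_i\}} < \overline{\{\sigma_{i+1},\tau_i\}}$. Writing $\tau_i=\sigma_i\cup\{w_i\}=\sigma_{i+1}\cup\{v_i\}$ with $w_i\neq v_i$ (because $\sigma_i\neq\sigma_{i+1}$ are two distinct facets of $\tau_i$), Definition~\ref{def:arc_weight} reads this as $f(w_i)<f(v_i)$, while the set identity gives $\sigma_{i+1}=(\sigma_i\setminus\{v_i\})\cup\{w_i\}$.

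From here I would chain the local inequalities through the lexicographic order of Definition~\ref{def:lex_order}. Each step passes from $\sigma_i$ to $\sigma_{i+1}$ by replacing a vertex of value $f(v_i)$ by a vertex of strictly smaller value $f(w_i)$, so $f(\sigma_{i+1}) < f(\sigma_i)$ in lex order. Chaining yields $f(\sigma_0) > f(\sigma_1) > \cdots > f(\sigma_n)$, and the closure hypothesis $\sigma_n=\sigma_0$ then gives $f(\sigma_0) > f(\sigma_0)$, a contradiction; hence no non-trivial closed $\mathcal{V}$-path exists, and $\mathcal{V}$ is a discrete gradient field.

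The main obstacle I anticipate is the multiset comparison step: Definition~\ref{def:lex_order} is phrased in terms of set inclusion and symmetric difference between whole simplices rather than a single coordinate swap. I would therefore include a short auxiliary observation stating that replacing one value by a strictly smaller one in the multiset $f(\sigma_i)$ decreases its lex rank, which amounts to applying Definition~\ref{def:lex_order} to the pair $(\sigma_i,\sigma_{i+1})$ whose symmetric difference reduces to $\{v_i,w_i\}$ with maximum at $v_i\in\sigma_i$. Beyond this bookkeeping, the argument is a direct consequence of the greedy selection rule and the tie-free assumption on adjacent arcs.
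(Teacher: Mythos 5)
There is a genuine gap in the step you flag as the "greedy rule" inequality. You claim that for each $i$, since $\{\sigma_i,\tau_i\}\in M$ and $\{\sigma_{i+1},\tau_i\}\notin M$ share the node $\tau_i$, the greedy selection forces $\overline{\{\sigma_i,\tau_i\}} < \overline{\{\sigma_{i+1},\tau_i\}}$. This is false. The greedy algorithm only guarantees that an edge is minimal among the edges \emph{still present in the graph} at the iteration in which it is selected; the adjacent arc $\{\sigma_{i+1},\tau_i\}$ may already have been deleted at that point because its other endpoint $\sigma_{i+1}$ was saturated earlier by an even cheaper arc $\{\sigma_{i+1},\tau_{i+1}\}$. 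In that situation $\overline{\{\sigma_{i+1},\tau_i\}}$ can be strictly smaller than $\overline{\{\sigma_i,\tau_i\}}$. A concrete instance: take the $1$-complex with vertices $a,b,c,d$, edges $ab,bc,cd$, and $f(a)=0$, $f(b)=10$, $f(c)=1$, $f(d)=2$. The greedy matching produces $b\rightarrow ab$, $d\rightarrow cd$, $c\rightarrow bc$, and along the $\mathcal{V}$-path $\blacktriangleleft c\,(bc)\,b\,(ab)\,a\blacktriangleright$ the matched arc $\{c,bc\}$ has weight $10$ while the adjacent unmatched arc $\{b,bc\}$ has weight $1$; correspondingly $f(\sigma_0)=f(c)<f(\sigma_1)=f(b)$, so the step-by-step decrease you want to chain does not hold. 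The paper itself warns of exactly this after Theorem~\ref{theo:decreasing_flow} ("we can not say the same for any intermediate path of simplexes"), and the strictly decreasing statement you are implicitly proving is Theorem~\ref{theo:decreasing_flow_smooth}, which requires the additional discrete smoothness hypothesis. Your argument, if it worked, would prove that theorem with no smoothness assumption --- a sign that the local inequality cannot be extracted from the greedy rule alone.

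The repair is to replace the local, per-step comparison by a global one, which is what the paper does. The closed path $P$ is a maximal alternating path (every saturated vertex of $P$ is saturated by an edge of $P$), so Theorem~\ref{thm:if_map_then_matched} guarantees that the \emph{minimum-weight} arc of the whole cycle is matched. Hence the minimum of $f$ over $V_n\cup W_n$ (the gained and lost vertices) is attained at a gained-but-never-lost vertex, i.e. lies in $f(V_n\setminus W_n)\subseteq f(\sigma_n\setminus\sigma_0)$ by Lemma~\ref{lem:vpath_as_alternate_path_1}; since $\sigma_0=\sigma_n$ this set is empty, a contradiction. So only one inequality --- the one at the global minimum of the cycle --- is needed, and it is the only one the greedy rule actually delivers. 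The rest of your write-up (the rewriting of the $\mathcal{V}$-path as an alternating path, and the observation that swapping a vertex for one of strictly smaller $f$-value decreases the lexicographic rank) is correct but rests on the invalid pointwise inequality.
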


Following the theoretical assurances, the next theorem to be enunciated guarantees a notion of decreasing flow in a $\mathcal{V}-path$. Essentially, we want to show that from any simplex, by following the $\mathcal{V}-path$ to the critical simplex, then the latter is smaller than any other simplex in the path.

\begin{theorem}\label{theo:decreasing_flow}
In a discrete vector field $\mathcal{V}$ associated with the greedy matching $M$, for any $\mathcal{V}-path$ $\blacktriangleleft\sigma_0\tau_0\sigma_1\tau_1\dots\sigma_n\blacktriangleright$ we have that, if $\sigma_{n}$ is critical, then $f(\sigma_{n}) < f(\sigma_i)$ for all $0 \leq i \leq n-1$.
\end{theorem}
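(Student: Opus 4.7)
}

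The plan is to establish consecutive decrease in lexicographic order, i.e.\ $f(\sigma_i) > f(\sigma_{i+1})$ for every $0 \leq i \leq n-1$, and then conclude by transitivity that $f(\sigma_n) < f(\sigma_i)$ for all $i < n$. The hypothesis that $\sigma_n$ is critical will be used only at the final step of the chain, to guarantee that the argument also applies with $\sigma_{i+1} = \sigma_n$.

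For the consecutive decrease, I would work edge by edge on the two-edge alternating sub-path
\[
P_i \;=\; \{\sigma_i,\tau_i\}\{\sigma_{i+1},\tau_i\},
\]
which lies inside the maximal alternating path $P$ representing the whole $\mathcal{V}$-path. First I would check that $P_i$ is itself a maximal alternating path: its three nodes are $\sigma_i$, $\tau_i$ and $\sigma_{i+1}$; the first two are saturated by the matched edge $\{\sigma_i,\tau_i\}\in M$, while $\sigma_{i+1}$ is either matched by $\{\sigma_{i+1},\tau_{i+1}\}\in M$ (when $i<n-1$) or critical (when $i = n-1$, using the hypothesis on $\sigma_n$). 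Applying Theorem~\ref{thm:if_map_then_matched} to $P_i$, the minimum-weight edge of $P_i$ belongs to $M$; since $\{\sigma_{i+1},\tau_i\}\notin M$, the tie-free assumption forces the strict inequality $\overline{\{\sigma_i,\tau_i\}} < \overline{\{\sigma_{i+1},\tau_i\}}$.

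Next I would translate this weight inequality into the lexicographic comparison of Definition~\ref{def:lex_order}. Writing $u = \tau_i\setminus\sigma_i$ and $v = \tau_i\setminus\sigma_{i+1}$, Definition~\ref{def:arc_weight} gives $\overline{\{\sigma_i,\tau_i\}} = f(u)$ and $\overline{\{\sigma_{i+1},\tau_i\}} = f(v)$, so the previous step yields $f(u) < f(v)$. Since $\sigma_i$ and $\sigma_{i+1}$ are distinct facets of $\tau_i$ (neither is a subset of the other) and $f$ is injective, their symmetric difference in $f$ is exactly $\{f(u),f(v)\}$, with $v\in\sigma_i$ and $u\in\sigma_{i+1}$. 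The maximum of the symmetric difference is therefore $f(v)\in f(\sigma_i)$, which by Definition~\ref{def:lex_order} means $f(\sigma_i) > f(\sigma_{i+1})$. Chaining these $n$ inequalities gives $f(\sigma_0) > f(\sigma_1) > \dots > f(\sigma_n)$, and hence the claimed conclusion.

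The only delicate point is verifying that the sub-path $P_i$ really is maximal alternating, since it is here that the hypothesis on $\sigma_n$ is used: without it, the last vertex $\sigma_{i+1}=\sigma_n$ could be saturated by an edge outside $M$ and Theorem~\ref{thm:if_map_then_matched} could not be invoked. Everything else is a routine unfolding of the lexicographic order definition together with the arc-weight convention.
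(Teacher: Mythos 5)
There is a genuine gap, and it sits exactly where you flagged the ``only delicate point'': the two-edge sub-path $P_i=\{\sigma_i,\tau_i\}\{\sigma_{i+1},\tau_i\}$ is \emph{not} a maximal alternating path when $i<n-1$. Definition~\ref{def:maximal_alternating_path} (as it is used in Lemma~\ref{lem:v_geq_e}) requires every saturated vertex of the path to be saturated by an edge \emph{of the path}; for $i<n-1$ the vertex $\sigma_{i+1}$ is saturated by $\{\sigma_{i+1},\tau_{i+1}\}\in M$, which does not belong to $P_i$. This is not a technicality: the greedy algorithm may well match $\{\sigma_{i+1},\tau_{i+1}\}$ at an early iteration with a very small weight, remove $\sigma_{i+1}$ from the graph, and only later match $\tau_i$ to $\sigma_i$ with a weight \emph{larger} than $\overline{\{\sigma_{i+1},\tau_i\}}$. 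In that case the minimum edge of $P_i$ is the unmatched edge $\{\sigma_{i+1},\tau_i\}$, Theorem~\ref{thm:if_map_then_matched} does not apply, and your consecutive inequality $f(\sigma_i)>f(\sigma_{i+1})$ fails. Indeed, the chain $f(\sigma_0)>f(\sigma_1)>\dots>f(\sigma_n)$ you are trying to prove is precisely the statement of Theorem~\ref{theo:decreasing_flow_smooth}, which the paper only obtains under the additional hypothesis of discrete smoothness, and the text explicitly warns after Theorem~\ref{theo:decreasing_flow} that one ``can not say the same for any intermediate'' simplex of the path. Your argument is sound only for the last step $i=n-1$, where $\sigma_n$ is unsaturated and $P_{n-1}$ genuinely is maximal alternating; that yields $f(\sigma_{n-1})>f(\sigma_n)$ but nothing about the earlier $\sigma_i$.

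The paper avoids this by arguing globally rather than edge by edge: the \emph{whole} path $P$ (and likewise each suffix $\blacktriangleleft\sigma_i\tau_i\dots\sigma_n\blacktriangleright$) is a maximal alternating path because every saturating edge $\{\sigma_j,\tau_j\}$ lies in the path and $\sigma_n$ is unsaturated. Theorem~\ref{thm:if_map_then_matched} then says the globally minimal weight over the path is attained by a matched edge, so $\min f(V_n\cup W_n)\in f(V_n\setminus W_n)\subseteq f(\sigma_n\setminus\sigma_0)$ by Lemma~\ref{lem:vpath_as_alternate_path_1}, and Lemma~\ref{lem:vpath_as_alternate_path_2} converts this into the lexicographic comparison $f(\sigma_n)<f(\sigma_0)$. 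If you want to salvage your plan, replace the two-edge sub-paths by these suffix sub-paths, which are the smallest pieces on which Theorem~\ref{thm:if_map_then_matched} is actually applicable.
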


Still in the Theorem~\ref{theo:decreasing_flow}, it is worth mentioning that if $\sigma_n$ was not critical, then the path would not be a maximal alternating path (Definition \ref{def:maximal_alternating_path}). That way, we would not been able to use Theorem~\ref{thm:if_map_then_matched} in our proof. 

There is a guarantee that the critical simplex where $\mathcal{V}-path$ ends is the smallest of all simplexes in the path, however, we can not say the same for any intermediate path of simplexes in that $\mathcal{V}-path$.

Finally, to end this section, we will prove a sufficient condition for a simplex to be matched. First we define a set $H(\sigma)$ whose elements are vertices which provide the weights of all the arcs incident to $\sigma$.  

\begin{definition}\label{def:weights_of_h}
$H(\sigma) = \{v \in \Delta | \sigma \setminus v \in \Delta$ or $\sigma \cup v \in \Delta \}$ and $ h_f(\sigma) = argmin f(H(\sigma))$.
\end{definition}

The following lemma give a sufficient condition for a simplex to be matched.


\begin{theorem}[Steepest Descent]\label{thm:min_lemma_2}
If there exists $\tau \succ \sigma$ such that $\tau \setminus \sigma = h_f(\sigma)$, then $\sigma \rightarrow \tau$.
\end{theorem}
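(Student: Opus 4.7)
The plan is to apply Lemma~\ref{lem:is_matched} to the arc $e = \{\sigma,\tau\}$, whose weight is $\overline{e} = f(v)$ with $v = h_f(\sigma) = \tau\setminus\sigma$. Concretely, I want to show that at the iteration of Algorithm~\ref{alg:greedy_matching} that processes weight $f(v)$, both $\sigma$ and $\tau$ are still in the induced subgraph $G_{f(v)}$; Lemma~\ref{lem:is_matched} then forces $e\in M$, i.e.\ $\sigma\to\tau$.

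The key combinatorial observation is that every vertex in $H(\tau)$ other than $v$ also lies in $H(\sigma)$. I would verify this by cases on $u\in H(\tau)\setminus\{v\}$:
\begin{itemize}
\item If $u\in\tau$, then since $u\ne v$ and $\sigma=\tau\setminus v$, we have $u\in\sigma$, so $\sigma\setminus u\subset\sigma\in\Delta$, giving $u\in H(\sigma)$.
\item If $u\notin\tau$ and $\tau\cup u\in\Delta$, then $\sigma\cup u\subset \tau\cup u\in\Delta$, so by closure under taking subsets $\sigma\cup u\in\Delta$, giving $u\in H(\sigma)$.
\end{itemize}
Since $v=\arg\min f(H(\sigma))$ and $f$ is injective, this yields $f(u)>f(v)$ for every $u\in H(\tau)\setminus\{v\}$. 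In other words, among all arcs of the Hasse diagram incident to either $\sigma$ or $\tau$, the arc $e$ is the unique arc of minimum weight $f(v)$.

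From this I conclude the weight bound $\overline{\sigma}\geq f(v)$ and $\overline{\tau}\geq f(v)$: an endpoint can only be saturated earlier than iteration $f(v)$ if it is incident to some arc of weight strictly less than $f(v)$, but no such arc exists by the previous paragraph. Hence $\sigma,\tau\in V_{f(v)}$, so $e\in E(G_{f(v)})$, and Lemma~\ref{lem:is_matched} gives $e\in M$, proving $\sigma\to\tau$.

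The main obstacle is the transfer step from $H(\tau)$ to $H(\sigma)$, because the lemma's hypothesis only controls the $f$-values of vertices in $H(\sigma)$, whereas $\tau$ could a priori be snatched away by an upward arc from $\tau$ to some coface $\tau\cup u$. The simplicial closure argument handles exactly this: any such coface forces $\sigma\cup u$ to be a simplex too, importing that arc's weight into $H(\sigma)$ and bringing it under the minimality assumption. Once this is in hand, the rest is a direct application of Lemma~\ref{lem:is_matched} and the definition of $G_{f(v)}$.
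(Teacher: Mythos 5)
Your proof is correct and follows essentially the same route as the paper's: the paper packages your containment argument as Lemma~\ref{lem:H_weights} ($H(\tau)\subseteq H(\sigma)$, hence $\overline{\{\sigma,\tau\}}=h_f(\sigma)=h_f(\tau)\leq\overline{\sigma},\overline{\tau}$) and then invokes the maximal-alternating-path machinery, which reduces to exactly the $G_{f(v)}$ and Lemma~\ref{lem:is_matched} step you apply directly. Your version is somewhat more explicit about why $H(\tau)\setminus\{v\}\subseteq H(\sigma)$ (the simplicial-closure case analysis the paper dismisses as ``trivial''), but the underlying idea is identical.
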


 A straighfoward corollary is a necessary condition for a critical simplex.
 
\begin{corollary}\label{cor:critical}
If $\sigma$ is critical, then $h_f(\sigma) \in \sigma$ and $h_f(\sigma \setminus h_f(\sigma)) \notin \sigma$.
\end{corollary}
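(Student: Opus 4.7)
The plan is to derive both assertions by contradiction, using Theorem~\ref{thm:min_lemma_2} (Steepest Descent) as the main lever. For part~(a), assume $h_f(\sigma) \notin \sigma$. Since $h_f(\sigma) \in H(\sigma)$ and $h_f(\sigma) \notin \sigma$, the definition of $H(\sigma)$ forces $\tau := \sigma \cup \{h_f(\sigma)\}$ to be a coface of $\sigma$ in $\Delta$. Then $\tau \succ \sigma$ with $\tau \setminus \sigma = h_f(\sigma)$, so Steepest Descent immediately pairs $\sigma \rightarrow \tau$, contradicting criticality.

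For part~(b), having established $v_0 := h_f(\sigma) \in \sigma$, set $\rho := \sigma \setminus \{v_0\}$ and assume toward contradiction that $h_f(\rho) \in \sigma$. I would split this assumption into two subcases. If $h_f(\rho) = v_0$, then $\sigma \succ \rho$ with $\sigma \setminus \rho = h_f(\rho)$, so Steepest Descent applied at $\rho$ yields $\rho \rightarrow \sigma$, saturating $\sigma$ from below and again contradicting criticality. If instead $w := h_f(\rho) \in \rho$, I exploit a symmetric membership: $w \in \rho \subset \sigma$ lies in $H(\sigma)$, and $v_0 \in H(\rho)$ because $\rho \cup \{v_0\} = \sigma \in \Delta$. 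Minimality of $v_0$ over $H(\sigma)$ and of $w$ over $H(\rho)$ give $f(v_0) \leq f(w)$ and $f(w) \leq f(v_0)$, hence $f(v_0) = f(w)$; injectivity of $f$ then forces $w = v_0$, contradicting $v_0 \notin \rho$.

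The main obstacle is this second subcase, which Steepest Descent alone cannot close: when $h_f(\rho)$ sits inside $\rho$ rather than outside, the theorem does not trigger, since its hypothesis demands a coface direction. One needs the symmetric swap argument above, and the injectivity hypothesis on $f$ at the vertices is essential --- without it, two ties in the $f$-values could simultaneously realize both steepest-descent minima and break the collapse $w=v_0$.
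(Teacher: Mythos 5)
Your proof is correct and follows essentially the route the paper intends: the paper gives no written proof, presenting the statement as an immediate contrapositive of the Steepest Descent Theorem~\ref{thm:min_lemma_2}, which is exactly how you close part~(a) and the first subcase of part~(b). Your second subcase (ruling out $h_f(\rho)\in\rho$ via the two-sided minimality comparison and injectivity of $f$) is the one step that is not literally an application of Theorem~\ref{thm:min_lemma_2}, but it is just an explicit instance of the paper's Lemma~\ref{lem:H_weights} ($\rho\subseteq\sigma$ implies $h_f(\rho)\leq h_f(\sigma)$), so you are filling in a detail the paper elides rather than taking a different path.
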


\section{Discrete Smoothness} \label{sec:discrete_smoothing}

In this section, we introduce a concept of discrete smoothness for functions on a simplicial complex and then prove that for discrete smooth functions, the converse of the Steepest Descent Theorem (Theorem \ref{thm:min_lemma_2})  holds (Theorem \ref{thm:smooth}). In that case, we have a full characterization of the greedy Morse matching and its critical simplices. In Theorems \ref{thm:critical_above} and \ref{thm:critical_below}, we show that the dynamics around critical simplices are similar to their counterparts from the smooth theory. 


Our results on discrete smoothness generalize several results stated with barycentric subdivision~\cite{babson2005discrete,lewiner2005geometric,lewiner2013critical}, and we prove at the end of the Section \ref{sec:discrete_smoothing} that any function defined on a simplicial complex is discrete smooth after one barycentric subdivision.

We begin with the main definitions of this section.

\begin{definition}[Discrete Smoothness]\label{def:smooth}
A simplex $\sigma \in \Delta$ is discrete smooth, if and only if, if $h_f(\sigma) \in \sigma$, then for all $\tau \succ \sigma, h_f(\tau \setminus h_f(\sigma))=h_f(\sigma)$. A function $f$ is called \emph{discrete smooth} on $\Delta$, if for every $\sigma \in \Delta$, $\sigma$ is discrete smooth.
\end{definition}

Note that this definition depends on the function $f$ as well as the structure of the simplicial complex $\Delta$, and we write that the simplicial complex $\Delta$ is discrete smooth when the function defined on $\Delta$ is discrete smooth.
In this entire section we assume that $\Delta$ is discrete smooth.

Observe that the simplicial complexes in the right of Figure~\ref{fig:smoothness} are not discrete smooth.

\begin{figure}[h] 
    \centering
    \includegraphics[scale=0.70]{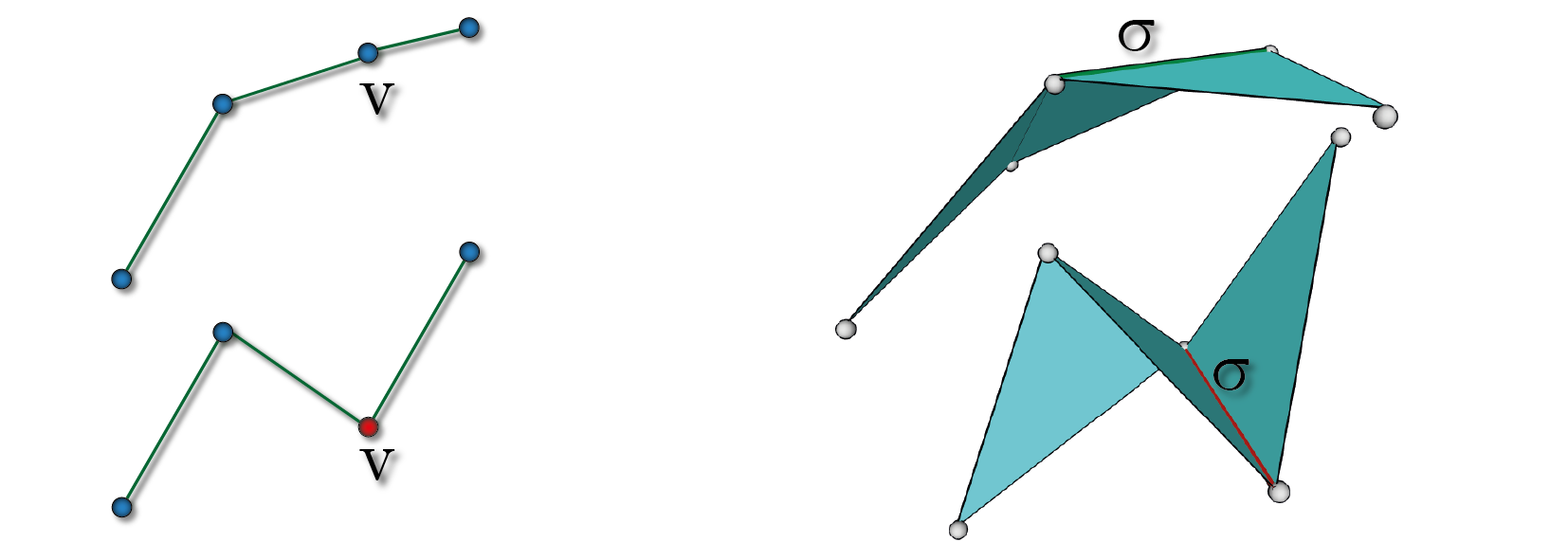}
    \caption{Examples of a smooth vertex $v$ (top left), a non-smooth vertex $v$ (bottom left), a smooth edge $\sigma$ (top right), and non-smooth edge $\sigma$ (bottom right). The function $f$ is the height function.}
    \label{fig:smoothness}
\end{figure}

With discrete smoothness we have a full characterization of the greedy Morse matching.

\begin{theorem}[Steepest Descent with Discrete Smoothness]\label{thm:smooth}

If $\sigma$ is smooth, then $\tau \succ \sigma$ such that $\tau \setminus \sigma = h_f(\sigma) \iff \sigma \rightarrow \tau$.
\end{theorem}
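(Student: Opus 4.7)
The plan is to establish the two implications of the biconditional separately. The forward direction, $\tau\setminus\sigma=h_f(\sigma)\Rightarrow\sigma\rightarrow\tau$, is exactly the Steepest Descent Theorem (Theorem~\ref{thm:min_lemma_2}) and requires no smoothness, so nothing new is needed there. All the content of the statement lies in the converse, which I would prove by contradiction.

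Assume $\sigma\rightarrow\tau$, write $u:=h_f(\sigma)$ and $v:=\tau\setminus\sigma$, and suppose for contradiction that $u\neq v$. I split into two cases depending on whether $u$ lies in $\sigma$, and in each case exhibit a second pairing that violates the fact that $\mathcal{V}$ is a matching. If $u\notin\sigma$, smoothness is not actually used: membership $u\in H(\sigma)$ forces $\sigma\cup u\in\Delta$, so $\tau':=\sigma\cup u$ is a well-defined coface of $\sigma$ with $\tau'\setminus\sigma=u=h_f(\sigma)$, and Theorem~\ref{thm:min_lemma_2} yields $\sigma\rightarrow\tau'$. Since $\tau'\neq\tau$, the simplex $\sigma$ would belong to two distinct pairs of $\mathcal{V}$, a contradiction.

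If instead $u\in\sigma$, discrete smoothness of $\sigma$ becomes essential: applied to the coface $\tau\succ\sigma$, Definition~\ref{def:smooth} gives $h_f(\tau\setminus u)=u$. I set $\sigma':=\tau\setminus u$, which is a facet of $\tau$ distinct from $\sigma$ because $u\in\sigma\setminus\sigma'$, and which satisfies $\sigma'\cup u=\tau$ with $u\notin\sigma'$. Theorem~\ref{thm:min_lemma_2} applied to $\sigma'$ then forces $\sigma'\rightarrow\tau$, contradicting $\sigma\rightarrow\tau$ since $\tau$ cannot be paired with two distinct facets. The main conceptual obstacle is recognising that smoothness should be leveraged to produce a competing facet $\sigma'=\tau\setminus h_f(\sigma)$ of $\tau$, rather than a competing coface of $\sigma$; once that substitution is made, the two earlier theorems close both cases cleanly, with no need to reason about the execution order of the greedy algorithm.
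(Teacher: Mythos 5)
Your proof is correct and follows essentially the same route as the paper's: the forward direction is Theorem~\ref{thm:min_lemma_2}, and the converse is handled by the same two cases (when $h_f(\sigma)\in\sigma$, smoothness produces the competing facet $\tau\setminus h_f(\sigma)$ matched into $\tau$; when $h_f(\sigma)\notin\sigma$, steepest descent matches $\sigma$ into $\sigma\cup h_f(\sigma)$), with the matching property supplying the contradiction in each case. The only difference is cosmetic ordering of the case analysis.
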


This characterization gives an important and extremely fast, linear and naturally parallel algorithm to compute greedy Morse matching (in the discrete smooth case): the algorithm only needs to check every $\sigma \in \Delta$ and $H(\sigma)$ to see if $\sigma$ is matched above. 

With the converse of the Steepest Descent Theorem, we can now prove the converse of Corollary \ref{cor:critical}, which follows directly by applying the  converse of the Steepest Descent Theorem.
\begin{corollary}\label{cor:critical_smooth}
$\sigma$ is critical if and only if $h_f(\sigma) \in \sigma$ and $h_f(\sigma \setminus h_f(\sigma)) \notin \sigma$.
\end{corollary}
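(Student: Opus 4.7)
The forward implication is exactly Corollary~\ref{cor:critical}, so the only new work is the converse: from $h_f(\sigma) \in \sigma$ and $h_f(\sigma \setminus h_f(\sigma)) \notin \sigma$ deduce that $\sigma$ is critical. The plan is to argue by contradiction, assuming $\sigma$ is matched and using Theorem~\ref{thm:smooth} (the converse Steepest Descent, available because $\Delta$ is discrete smooth) to turn each possible matched arc incident to $\sigma$ into an $h_f$-equation that clashes with one of the two hypotheses.

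Being matched, $\sigma$ sits at an endpoint of an arc of $\mathcal{V}$. If $\sigma \to \tau$ for some $\tau \succ \sigma$, Theorem~\ref{thm:smooth} applied at the smooth simplex $\sigma$ gives $\tau \setminus \sigma = h_f(\sigma)$; this vertex lies outside $\sigma$, so the first hypothesis $h_f(\sigma) \in \sigma$ is violated. If instead $\sigma' \to \sigma$ for some $\sigma' \prec \sigma$, Theorem~\ref{thm:smooth} applied at $\sigma'$ yields $\sigma \setminus \sigma' = h_f(\sigma')$. Writing $v := \sigma \setminus \sigma' \in \sigma$ and $w := h_f(\sigma)$, this reads $h_f(\sigma \setminus v) = v$. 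When $v = w$ the equation becomes $h_f(\sigma \setminus w) = w \in \sigma$, directly contradicting the second hypothesis. When $v \neq w$, one observes that $w \in \sigma \setminus v \subseteq H(\sigma \setminus v)$ and $v \in \sigma \subseteq H(\sigma)$, so the minimality of $w$ over $H(\sigma)$ and of $v$ over $H(\sigma \setminus v)$ combine into $f(w) \leq f(v) \leq f(w)$, and injectivity of $f$ produces the contradiction $v = w$.

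The only step that is not mere bookkeeping is the $v \neq w$ sub-case, where I want to verify that $v$ and $w$ really do belong to both neighborhoods $H(\sigma)$ and $H(\sigma \setminus v)$ so that the minimality inequalities can be chained; this reduces to noticing that both vertices are shared faces of $\sigma$ and $\sigma \setminus v$. Everything else is a direct application of Theorem~\ref{thm:smooth} to the two possible matched arcs at $\sigma$, which matches the author's remark that the corollary follows directly from the converse Steepest Descent.
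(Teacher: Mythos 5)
Your proof is correct and follows the same route the paper intends: the paper gives no written proof beyond the remark that the corollary ``follows directly by applying the converse of the Steepest Descent Theorem,'' and your case analysis (matched above contradicts $h_f(\sigma)\in\sigma$; matched below forces $\sigma\setminus\sigma'=h_f(\sigma')$, which by the two minimality inequalities over $H(\sigma)\subseteq H(\sigma\setminus v)$ and injectivity of $f$ forces $\sigma\setminus\sigma'=h_f(\sigma)$ and hence contradicts $h_f(\sigma\setminus h_f(\sigma))\notin\sigma$) is exactly the elaboration of that remark, with the forward direction correctly delegated to Corollary~\ref{cor:critical}.
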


%

\subsection{Geometrically Faithful Critical Simplicies and Paths}
Now that we have characterized the critical simplices, we want to understand the behavior of the greedy Morse matching in their neighborhood. We show that this behavior is similar to the local dynamics around the continuous critical points. In this entire section we assume that $\Delta$ is discrete smooth.

\begin{theorem}\label{thm:critical_above}
If $\sigma$ is critical, then all facets of $\sigma$ are matched above.
\end{theorem}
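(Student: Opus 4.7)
The plan is to fix a critical simplex $\sigma$, set $v := h_f(\sigma)$ so that by Corollary~\ref{cor:critical_smooth} we have $v \in \sigma$ and $h_f(\sigma \setminus v) \notin \sigma$, and then verify for every facet $\sigma' \prec \sigma$ that the minimiser $h_f(\sigma')$ lies outside $\sigma'$. Once that is established, Theorem~\ref{thm:min_lemma_2} (Steepest Descent) immediately produces a coface $\tau' = \sigma' \cup h_f(\sigma')$ with $\sigma' \rightarrow \tau'$, which is exactly the conclusion that $\sigma'$ is matched above.

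The facets of $\sigma$ split into two families: the distinguished facet $\sigma \setminus v$, and the facets of the form $\sigma \setminus u$ with $u \in \sigma \setminus \{v\}$, all of which contain $v$. For $\sigma' = \sigma \setminus v$, Corollary~\ref{cor:critical_smooth} gives $h_f(\sigma') \notin \sigma$, so in particular $h_f(\sigma') \notin \sigma'$, and Steepest Descent finishes this case at once.

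For a facet $\sigma' = \sigma \setminus u$ with $v \in \sigma'$, the argument I would use is by contradiction. Suppose $w := h_f(\sigma') \in \sigma'$. Since $\Delta$ is discrete smooth, applying Definition~\ref{def:smooth} to $\sigma'$ with the coface $\sigma \succ \sigma'$ yields $h_f(\sigma \setminus w) = w$. Now split on $w$: if $w = v$, this reads $h_f(\sigma \setminus v) = v \in \sigma$, contradicting the second clause of Corollary~\ref{cor:critical_smooth}; if $w \neq v$, then $v \in \sigma \setminus w \subseteq H(\sigma \setminus w)$, and since $f$ is injective and $v$ is the strict minimiser of $f$ on $H(\sigma) \supseteq \sigma \ni w$, we have $f(v) < f(w)$, contradicting the fact that $w$ is the argmin of $f$ on $H(\sigma \setminus w)$. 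Either way, $h_f(\sigma') \notin \sigma'$, which is what we needed.

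The only conceptually delicate step is this second case: ruling out the possibility that $h_f(\sigma')$ stays inside $\sigma'$ when $v$ also lies in $\sigma'$, because otherwise $\sigma'$ would itself look locally critical and could fail to be matched upward. Discrete smoothness is precisely the lever that prevents this, via the identity $h_f(\sigma \setminus w) = w$, which transports the would-be local minimality at $w$ into a facet of $\sigma$ where it collides with the criticality data of $\sigma$ itself.
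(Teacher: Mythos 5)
Your proof is correct, and it reaches the conclusion by a genuinely different route from the paper's. The paper argues by contradiction at the level of the matching: if a facet $\rho \prec \sigma$ were not matched above, it must be matched below with some $\theta$ (the case where $\rho$ is itself critical being dismissed via Corollary~\ref{cor:critical_smooth}); Theorem~\ref{thm:smooth} then gives $\rho \setminus \theta = h_f(\theta)$, and discrete smoothness transports this pairing up one dimension to produce a match $\sigma \setminus (\rho \setminus \theta) \rightarrow \sigma$, contradicting the criticality of $\sigma$. You instead work directly at the level of the minimisers: you split the facets into the distinguished one $\sigma \setminus h_f(\sigma)$, where Corollary~\ref{cor:critical_smooth} hands you $h_f(\sigma') \notin \sigma'$ immediately, and those containing $v = h_f(\sigma)$, where you exclude $w := h_f(\sigma') \in \sigma'$ by playing the smoothness identity $h_f(\sigma \setminus w) = w$ against either the second clause of the criticality characterisation (when $w = v$) or the strict minimality of $f(v)$ on $H(\sigma)$ (when $w \neq v$). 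Your version buys explicitness: every facet is shown to satisfy the hypothesis of Theorem~\ref{thm:min_lemma_2} by a concrete computation, and you never need the step, stated only tersely in the paper, that a facet of a critical simplex cannot itself be critical. The paper's version is shorter and uniform over all facets, but leans on that implicit fact and on a rather compressed application of smoothness. Both arguments ultimately rest on the same two ingredients, the steepest-descent characterisation and Definition~\ref{def:smooth}, so the difference is one of organisation rather than of substance; your case analysis could serve as a more self-contained replacement for the published proof.
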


\begin{figure}[h] 
    \centering
    \includegraphics[scale=0.70]{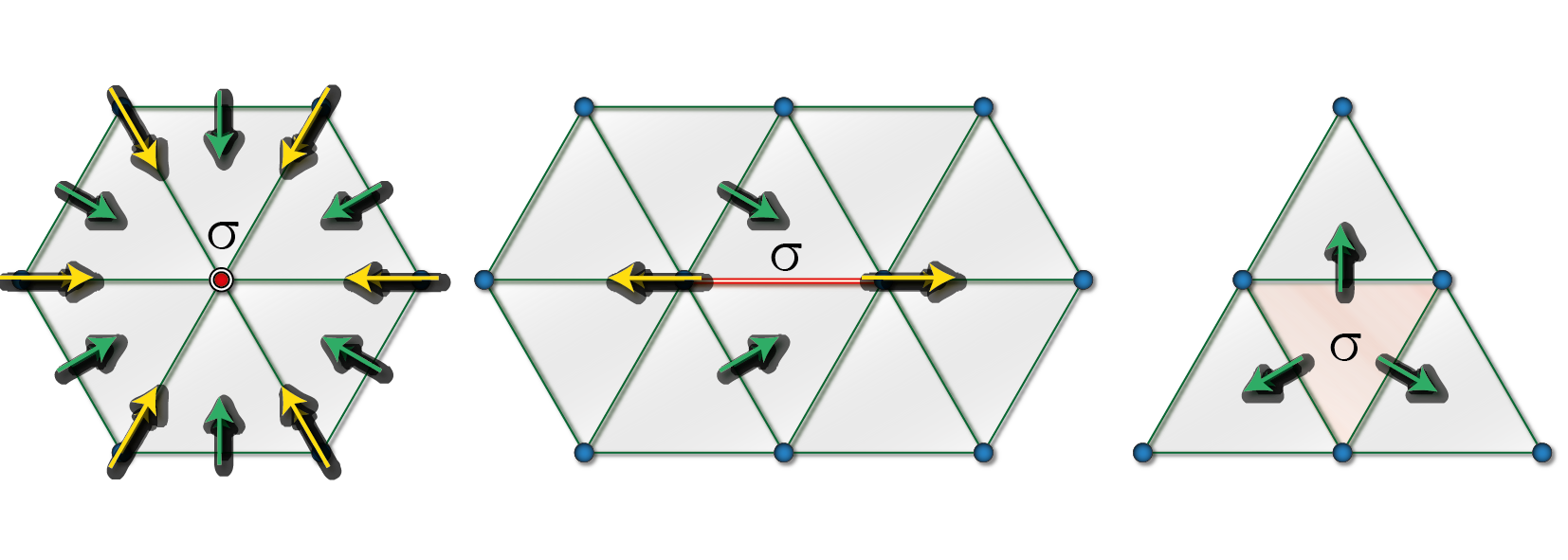}
    \caption{Results of Theorems \ref{thm:critical_above} and \ref{thm:critical_below} on a critical vertex (left), a critical edge (center), and a critical triangle (right). In a smooth complex, they are similar to minima (sinks), saddles, and maxima (sources).}
    \label{fig:nice_looking}
\end{figure}

\begin{theorem}\label{thm:critical_below}
If $\sigma$ is critical, then for all $\tau \succ \sigma, \tau  \setminus h_f(\sigma) \rightarrow \tau$.
\end{theorem}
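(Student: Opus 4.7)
The plan is to reduce the claim to a direct application of the Steepest Descent Theorem (Theorem~\ref{thm:min_lemma_2}), using discrete smoothness to verify its hypothesis at a neighboring face. Fix any $\tau \succ \sigma$, and set $v := h_f(\sigma)$ together with $\rho := \tau \setminus v$. Since $\sigma$ is critical, Corollary~\ref{cor:critical} gives $v \in \sigma \subset \tau$, so $\rho$ is a proper subset of $\tau$; by downward-closure of $\Delta$, $\rho \in \Delta$, and since $\dim(\rho) = \dim(\tau) - 1 = \dim(\sigma)$, we have $\rho \prec \tau$. The goal is thus to show $\rho \rightarrow \tau$.

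The crux is identifying the steepest descent vertex at $\rho$. Here is exactly where I would invoke discrete smoothness of $\sigma$ (which holds because $\Delta$ is assumed discrete smooth throughout this subsection): since $h_f(\sigma) = v \in \sigma$, Definition~\ref{def:smooth} yields $h_f(\tau' \setminus v) = v$ for every coface $\tau' \succ \sigma$. Specializing $\tau' = \tau$ gives $h_f(\rho) = v$.

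The conclusion then follows at once. Since $\tau \succ \rho$ and $\tau \setminus \rho = v = h_f(\rho)$, Theorem~\ref{thm:min_lemma_2} applied to $\rho$ yields $\rho \rightarrow \tau$, which is exactly $\tau \setminus h_f(\sigma) \rightarrow \tau$. I do not foresee a genuine obstacle: the argument is essentially a bookkeeping exercise once the definition of discrete smoothness is unfolded, and its role is precisely to propagate the Steepest Descent hypothesis from the critical simplex $\sigma$ to each codimension-one face $\tau \setminus v$ of its cofaces $\tau$.
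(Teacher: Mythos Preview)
Your proof is correct and follows essentially the same route as the paper: use criticality to get $h_f(\sigma)\in\sigma$, invoke discrete smoothness of $\sigma$ at the coface $\tau$ to obtain $h_f(\tau\setminus h_f(\sigma))=h_f(\sigma)$, and then conclude via Steepest Descent. The only cosmetic difference is that the paper cites Theorem~\ref{thm:smooth} at the last step while you cite Theorem~\ref{thm:min_lemma_2} directly; since only the forward implication is used, this is immaterial.
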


The main characterization of the critical simplicies and its neighborhood follows easily from the two above Theorem.

\begin{theorem}[Geometrically Faithful Critical Simplicies]
$\sigma$ is critical if and only if for all $\tau \succ \sigma, \tau  \setminus h_f(\sigma) \rightarrow \tau$ and all facets of $\sigma$ are matched above.
\end{theorem}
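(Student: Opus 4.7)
The plan is to derive this as a direct conjunction of the two preceding theorems in the forward direction, and to argue the backward direction via the characterization in Corollary~\ref{cor:critical_smooth}.

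For the forward direction, suppose $\sigma$ is critical. Theorem~\ref{thm:critical_above} gives that all facets of $\sigma$ are matched above, and Theorem~\ref{thm:critical_below} gives $\tau \setminus h_f(\sigma) \rightarrow \tau$ for every $\tau \succ \sigma$. Together these are exactly the two conditions on the right-hand side, so the forward implication reduces to the conjunction of the two previously established results with no further work.

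For the backward direction, assume hypotheses (i) all facets of $\sigma$ are matched above, and (ii) $\tau \setminus h_f(\sigma) \rightarrow \tau$ for every cofacet $\tau$ of $\sigma$. I would show that $\sigma$ satisfies both clauses of Corollary~\ref{cor:critical_smooth}. For the clause $h_f(\sigma) \in \sigma$: hypothesis (ii) treats $\tau \setminus h_f(\sigma)$ as a facet of $\tau$, so $h_f(\sigma)$ must be a vertex of every cofacet $\tau$; since the intersection of two distinct cofacets of $\sigma$ equals $\sigma$ itself, this forces $h_f(\sigma) \in \sigma$. For the clause $h_f(\sigma \setminus h_f(\sigma)) \notin \sigma$: set $\sigma^{*} := \sigma \setminus h_f(\sigma)$, which is a facet of $\sigma$. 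By (i), $\sigma^{*} \rightarrow \tau^{*}$ for some cofacet $\tau^{*}$ of $\sigma^{*}$, and the converse of Steepest Descent (Theorem~\ref{thm:smooth}) identifies $h_f(\sigma^{*})$ with $\tau^{*} \setminus \sigma^{*}$; it then suffices to show $\tau^{*} \neq \sigma$, because then the vertex $\tau^{*} \setminus \sigma^{*}$ lies outside $\sigma$, giving $h_f(\sigma^{*}) \notin \sigma$ as required.

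The main obstacle is precisely ruling out the case $\tau^{*} = \sigma$, i.e., $\sigma^{*} \rightarrow \sigma$. I would argue this by combining (ii) with the matching property of $\mathcal{V}$: every cofacet $\tau$ of $\sigma$ is already committed to the pair $\{\tau \setminus h_f(\sigma), \tau\}$, and each such facet $\rho_\tau := \tau \setminus h_f(\sigma)$ is itself a cofacet of $\sigma^{*}$ distinct from $\sigma$ (since $\sigma^{*} \subset \rho_\tau$ with matching dimensions). Discrete smoothness of $\sigma$ then synchronizes the $h_f$-values across these neighboring simplices: by the smoothness identity $h_f(\tau \setminus h_f(\sigma)) = h_f(\sigma)$, and chasing through Theorem~\ref{thm:smooth} constrains $h_f(\sigma^{*})$ to lie among the candidates forced by the rigid structure around $\sigma$. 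This rules out $\tau^{*} = \sigma$ and completes the proof by Corollary~\ref{cor:critical_smooth}.
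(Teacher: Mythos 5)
Your forward direction coincides with what the paper itself does: the paper's entire justification is that the statement ``follows easily from the two above Theorems,'' i.e.\ the conjunction of Theorems~\ref{thm:critical_above} and~\ref{thm:critical_below}, and it offers nothing for the converse. So the substantive question is whether your backward direction holds up, and there it has a genuine gap.

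Your argument that $h_f(\sigma)\in\sigma$ works by intersecting \emph{two distinct} cofacets of $\sigma$; it says nothing when $\sigma$ has zero or one cofacet, and that is exactly where the implication, read literally, fails. Take $\Delta$ to be a single triangle $abc$ with $f(a)<f(b)<f(c)$. One checks directly that this pair is discrete smooth, and the greedy matching is $b\rightarrow ab$, $c\rightarrow ac$, $bc\rightarrow abc$, with $a$ the unique critical simplex. Now let $\sigma=bc$: here $h_f(bc)=a\notin bc$, the unique cofacet $\tau=abc$ satisfies $\tau\setminus h_f(bc)=bc\rightarrow abc$, and both facets $b$ and $c$ of $bc$ are matched above --- yet $bc$ is not critical. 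So the right-hand side can hold for a non-critical simplex, and no amount of case analysis will close your gap unless $h_f(\sigma)\in\sigma$ is first secured, e.g.\ by adding it as a hypothesis or by reading ``$\tau\setminus h_f(\sigma)\rightarrow\tau$'' as requiring $\tau\setminus h_f(\sigma)$ to be a facet of $\tau$ distinct from $\sigma$. Separately, your last step --- ruling out $\sigma\setminus h_f(\sigma)\rightarrow\sigma$ --- is asserted (``discrete smoothness synchronizes the $h_f$-values \dots this rules out $\tau^{*}=\sigma$'') rather than proved; as written it restates the obstacle instead of resolving it. If $h_f(\sigma)\in\sigma$ is granted, the cleaner route is: no cofacet $\tau$ can be matched with $\sigma$ because hypothesis (ii) already matches $\tau$ with $\tau\setminus h_f(\sigma)\neq\sigma$, and $\sigma$ cannot be matched below with a facet $\rho$ because hypothesis (i) matches every such $\rho$ above with some $\theta$, where Theorem~\ref{thm:smooth} forces $\theta\setminus\rho=h_f(\rho)$ and a comparison of $h_f(\rho)$ with $h_f(\sigma)$ (via $H(\sigma)\subseteq H(\rho)$) must then be used to show $\theta\neq\sigma$; that comparison is the content your proposal is missing.
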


Figure~\ref{fig:nice_looking} shows the Theorem above on surfaces (2-manifolds). In particular, Theorem \ref{thm:critical_below} states that all edges adjacent to a minimum points towards the minimum, and Theorem \ref{thm:critical_above} states that all facets of a maximum points outwards that maximum.

When the complex is smooth, we obtained a much stronger result than Theorem \ref{theo:decreasing_flow}, where we have truly decreasing paths.
\begin{theorem}\label{theo:decreasing_flow_smooth}
In a discrete vector field $\mathcal{V}$ associated with the greedy matching $M$, for any $\mathcal{V}-path$ $\blacktriangleleft\sigma_0\tau_0\sigma_1\tau_1\dots\sigma_n\blacktriangleright$ we have that, if $\sigma_{n}$ is critical, then $f(\sigma_{0}) > f(\sigma_{1}) > \cdots > f(\sigma_n)$.
\end{theorem}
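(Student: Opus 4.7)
The plan is to strengthen Theorem \ref{theo:decreasing_flow} by proving the single-step inequality $f(\sigma_i) > f(\sigma_{i+1})$ for every $0 \leq i \leq n-1$; chaining these inequalities immediately gives the statement. The key input unavailable in the non-smooth setting is the converse half of the Steepest Descent characterization (Theorem~\ref{thm:smooth}), which lets us read off the direction of every matched pair along the path.

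Fix $0 \leq i \leq n-1$. By definition of a $\mathcal{V}$-path, $\sigma_i \to \tau_i$, so $\sigma_i$ is matched above, and discrete smoothness together with Theorem~\ref{thm:smooth} yields $\tau_i \setminus \sigma_i = h_f(\sigma_i)$. In particular, $h_f(\sigma_i) \notin \sigma_i$. Since $\sigma_{i+1}$ is a facet of $\tau_i$ distinct from $\sigma_i$, the vertex removed from $\tau_i$ to obtain $\sigma_{i+1}$ is not $h_f(\sigma_i)$, so it lies in $\sigma_i$; call it $v$. Hence
\[
\sigma_{i+1} \;=\; \bigl(\sigma_i \setminus \{v\}\bigr) \cup \{h_f(\sigma_i)\},\qquad v \in \sigma_i,\ v \neq h_f(\sigma_i).
\]
Because $v \in \sigma_i \subseteq H(\sigma_i)$ and $h_f(\sigma_i)$ minimises $f$ on $H(\sigma_i)$, injectivity of $f$ gives $f(v) > f(h_f(\sigma_i))$.

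Now I would apply Definition~\ref{def:lex_order} directly. The symmetric difference $f(\sigma_i)\,\Delta\,f(\sigma_{i+1})$ equals $\{f(v),\,f(h_f(\sigma_i))\}$, and its maximum is $f(v)$, which belongs to $f(\sigma_i)$. Since $\sigma_i$ and $\sigma_{i+1}$ have the same dimension and are distinct, $\sigma_i \not\subseteq \sigma_{i+1}$, so the second clause of the lexicographic order yields $f(\sigma_i) > f(\sigma_{i+1})$. Iterating over $i$ gives $f(\sigma_0) > f(\sigma_1) > \cdots > f(\sigma_n)$, as required.

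The only delicate point is the bookkeeping that identifies the symmetric difference correctly: one must keep track of the fact that the vertex removed to obtain $\sigma_{i+1}$ lies in $\sigma_i$ while the one added, $h_f(\sigma_i)$, does not, which is precisely where the assumption that $\sigma_i$ is matched above (hence not critical) is used. The hypothesis that $\sigma_n$ be critical is not actually invoked in this monotone-step argument; it is kept in the statement to parallel Theorem~\ref{theo:decreasing_flow}, and ensures in particular that the path terminates rather than circling back, matching the smooth intuition.
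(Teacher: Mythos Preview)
Your argument is correct. The key step---using the converse direction of Theorem~\ref{thm:smooth} to conclude that $\tau_i\setminus\sigma_i=h_f(\sigma_i)\notin\sigma_i$, identifying the single-vertex swap $\sigma_{i+1}=(\sigma_i\setminus\{v\})\cup\{h_f(\sigma_i)\}$ with $f(v)>f(h_f(\sigma_i))$, and then reading off $f(\sigma_i)>f(\sigma_{i+1})$ from the second clause of Definition~\ref{def:lex_order}---is exactly the natural route and goes through without issue. Your remark that $v\in\sigma_i\subseteq H(\sigma_i)$ is the right justification for the inequality $f(v)>f(h_f(\sigma_i))$.

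As for comparison: the paper does not actually provide a proof of Theorem~\ref{theo:decreasing_flow_smooth} in the appendix. The closest analogue is the proof of Theorem~\ref{theo:decreasing_flow_modified}, which in the modified-Hasse-diagram setting establishes the chain $f(\sigma_{i+1})<f(\tau_i)<f(\sigma_i)$ at each step. One could mimic that two-step scheme here as well (both inequalities follow from the same observation that $h_f(\sigma_i)$ is strictly $f$-smaller than every vertex of $\sigma_i$), but your direct comparison of $\sigma_i$ and $\sigma_{i+1}$ via their symmetric difference is just as clean and slightly shorter.

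Your closing observation is also well taken: the criticality of $\sigma_n$ is never used in the single-step argument, so the strict monotonicity $f(\sigma_0)>f(\sigma_1)>\cdots>f(\sigma_n)$ holds along \emph{any} $\mathcal{V}$-path in the discrete smooth setting, not only those terminating at a critical simplex. This is a genuine strengthening over the statement as written and is worth pointing out.
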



%

\subsection{Discrete Smoothness of the Barycentric Subdivision}\label{sec:bary}

The goal of this section is to prove that, for any function $f$ defined on a simplicial complex $\Delta$, a function induced by $f$ on the barycentric subdivision of $\Delta$ is discrete smooth. This proves that the results of the previous sections apply to barycentric subdivision, generalizing previous results on similar greedy constructions of Morse matchings~\cite{babson2005discrete,lewiner2005geometric,lewiner2013critical}.

\begin{definition}[Barycentric Subdivision]\label{def:barycentric}
The barycentric subdivision $\Delta'$ of a simplicial complex $\Delta$ is a simplicial complex constructed as follows:
\begin{itemize}
    \item for every simplex $\sigma \in \Delta$, there is a vertex $b(\sigma) \in \Delta'$,
    \item for every sequence $\{\sigma_0,\dotsc, \sigma_p\} \subseteq \Delta$, such that $\sigma_0 \subseteq \sigma_1 \subseteq \dotsc \subseteq \sigma_p$, there is a $p$-simplex $\Sigma= b(\sigma_0),\dotsc, b(\sigma_p) \in \Delta'$.
\end{itemize}
\end{definition}


Since each vertex $b(\sigma)$ of the subdivision corresponds to an original simplex $\sigma$ of $\Delta$, we extend the function $f$ to the vertices in $\Delta'$ ordering vertex $b(\sigma)$ according to the lexicographic ordering of the vertices of $\sigma$  with a function $f'$ (see Figure~\ref{fig:barycentric}).


\begin{definition}[Extension of $f$]\label{def:extension}
The extension $f'$ of function $f$ on the vertices of $\Delta'$ is defined by $f'(b(\sigma)) := f(\sigma)$. The values of $f'$ are totally ordered using lexicographic order from Definition \ref{def:lex_order}. Since $\Delta$ is a simplicial complex and $f$ is injective, then $f'$ is injective.
\end{definition}

To illustrate the ordering, consider $\Delta$ as a single triangle $abc$ and its faces, with $f(a) > f(b) > f(c)$. The lexicographic ordering orders the faces of $\Delta$ as: $f(a)> f(ab) > f(abc)   > f(ac) >  f(b) > f(bc) > f(c)$.

\begin{figure}[h] 
    \centering
    \includegraphics[scale=0.70]{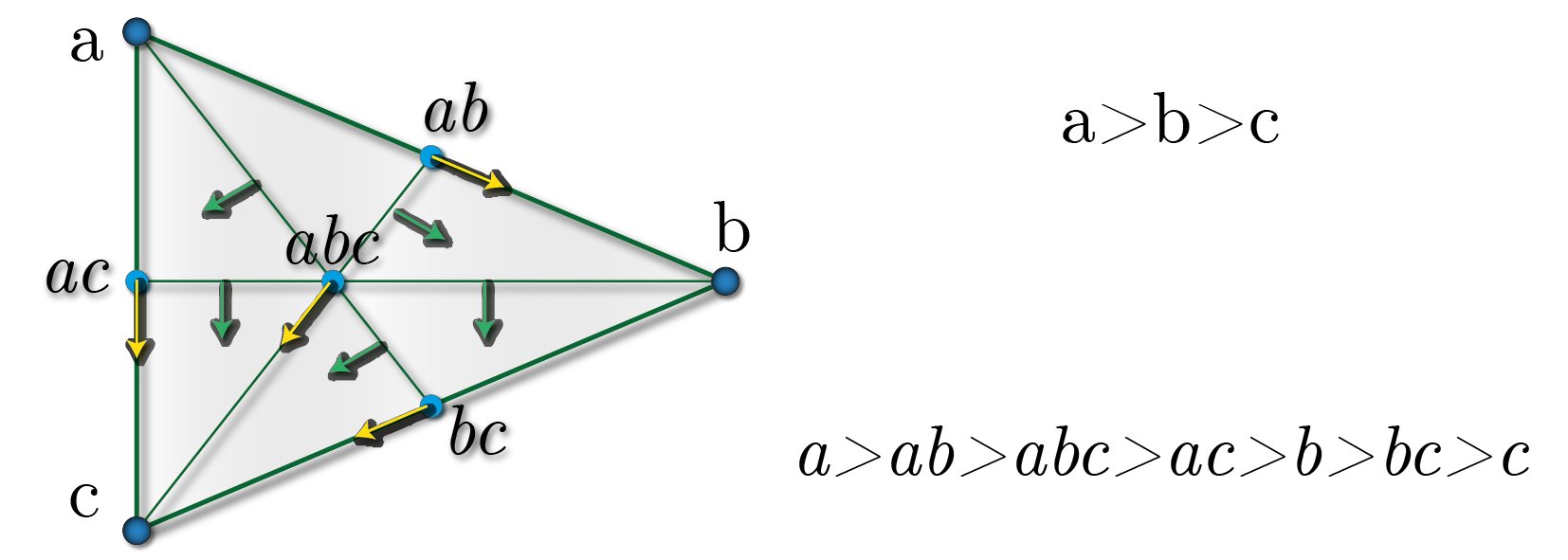}
    \caption{Barycentric subdivision of simplex $abc$, indicating the pairs that are always matched in the greedy Morse matching.}
    \label{fig:barycentric}
\end{figure}


\begin{theorem}[Discrete Smoothness of the barycentric subdivision]\label{thm:bary_smooth}
Given a function $f$ on $K$, the extension of the function $f$ on the barycentric subvision of $K$ is smooth.
\end{theorem}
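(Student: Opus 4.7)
A simplex $\Sigma\in\Delta'$ corresponds to a strict chain $\sigma_0\subsetneq\cdots\subsetneq\sigma_p$ of simplices of $\Delta$, and $H(\Sigma)$ consists of the chain vertices $b(\sigma_i)$ together with the ``insertions'' $b(\tau)$, $\tau\in\Delta$, fitting into a slot of the chain (before $\sigma_0$, strictly between consecutive $\sigma_i$ and $\sigma_{i+1}$, or strictly after $\sigma_p$). Each cofacet $T\succ\Sigma$ corresponds bijectively to inserting such a $\sigma^*$ into a slot. The plan is to verify Definition~\ref{def:smooth} at every $\Sigma$: assuming $h_{f'}(\Sigma)=b(\sigma_j)\in\Sigma$, to show $h_{f'}(T\setminus b(\sigma_j))=b(\sigma_j)$ for every cofacet $T\succ\Sigma$.

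I would first reformulate Definition~\ref{def:lex_order} in the equivalent form: $f'(b(\sigma))<f'(b(\sigma'))$ iff the $f$-values of $\sigma$, sorted in decreasing order and padded with $+\infty$, are lex-smaller than those of $\sigma'$. Under this reformulation, the chain vertices of $T\setminus b(\sigma_j)$ are either some $b(\sigma_i)$, $i\neq j$, or $b(\sigma^*)$, and all of them were already candidates in $H(\Sigma)$, hence dominated by $b(\sigma_j)$. The same holds for every insertion candidate of $T\setminus b(\sigma_j)$ that fits a slot inherited from $\Sigma$. The only genuinely new candidates are insertions $b(\tau)$ into the slot obtained by merging the two slots adjacent to $\sigma_j$, into which $\tau$ is necessarily incomparable to $\sigma_j$ and strictly between $\sigma_{j-1}$ and $\sigma_{j+1}$ (with suitable adjustments when $\sigma^*$ is inserted adjacent to $\sigma_j$, or at the endpoint cases $j\in\{0,p\}$).

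The crux is to check that each new $\tau$ satisfies $f'(b(\tau))>f'(b(\sigma_j))$. I would extract the structural consequences of the minimality of $b(\sigma_j)$ in $H(\Sigma)$: comparing $b(\sigma_j)$ with $b(\sigma_{j-1})$, $b(\sigma_{j+1})$ and with the insertions in slots $j$ and $j+1$ of $\Sigma$ forces $\sigma_j\setminus\sigma_{j-1}$ to be a single vertex $u$ that is moreover the $f$-minimum of $\sigma_{j+1}$ (with a symmetric conclusion at the boundary cases). Writing any new $\tau$ as $\sigma_{j-1}\cup B$ with $\emptyset\neq B\subseteq\sigma_{j+1}\setminus\sigma_j$, the symmetric difference $f(\tau)\Delta f(\sigma_j)$ equals $f(B)\cup\{f(u)\}$, whose maximum equals $\max f(B)$ and lies in $f(\tau)$; the non-subset case of Definition~\ref{def:lex_order} then gives $f'(b(\tau))>f'(b(\sigma_j))$, so $b(\sigma_j)$ remains the $f'$-minimum of $H(T\setminus b(\sigma_j))$.

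The main obstacle I foresee is turning the qualitative minimality ``$b(\sigma_j)=h_{f'}(\Sigma)$'' into quantitative control over every new candidate for every cofacet $T$: the endpoint cases $j\in\{0,p\}$ and the cases where $\sigma^*$ is inserted in slot $j$ or $j+1$, so that the merged slot is bounded on one side by $\sigma^*$ instead of by a chain vertex, each require a slightly different instance of the same symmetric-difference argument. Once these variations are all checked, smoothness at every $\Sigma\in\Delta'$ follows (the case $h_{f'}(\Sigma)\notin\Sigma$ being vacuous), completing the proof.
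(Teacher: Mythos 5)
Your reduction of the problem is sound: classifying $H(T\setminus b(\sigma_j))$ into inherited candidates (already in $H(\Sigma)$, hence dominated) and genuinely new insertions into the merged slot is correct, and for an interior index $j<p$ your two local facts --- that $\sigma_j\setminus\sigma_{j-1}$ is a single vertex $u$ and that $f(u)<\min f(\sigma_{j+1}\setminus\sigma_j)$ --- do follow from minimality and do dominate every new candidate $\sigma_{j-1}\cup B$ with $B\subseteq\sigma_{j+1}\setminus\sigma_j$ via the symmetric-difference rule. The gap is in the case $j=p$ when the cofacet $T$ is obtained by inserting $\sigma^*$ \emph{strictly below} $\sigma_p$ (into a lower slot of the chain). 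Then the merged slot of $T\setminus b(\sigma_p)$ is unbounded above: the new candidates are arbitrary cofaces $\tau\supsetneq\sigma_{p-1}$ with $u\notin\tau$, and they are contained in no $\sigma_{j+1}$ and in no $\sigma^*$. Such a $\tau$ is incomparable to $\sigma_p$, hence is not a candidate of $H(\Sigma)$, so no comparison available from the minimality of $b(\sigma_p)$ bounds $\max f(\tau\setminus\sigma_{p-1})$ from below by $f(u)$; if every vertex of $\tau\setminus\sigma_{p-1}$ had $f$-value below $f(u)$, the symmetric-difference rule would give $f'(b(\tau))<f'(b(\sigma_p))$ and smoothness would fail. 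This is not ``a slightly different instance of the same symmetric-difference argument'': no local comparison at level $j$ resolves it.

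The resolution is that this case is vacuous, but seeing that requires a global fact your local extraction does not provide: minimality of $b(\sigma_m)$ over \emph{all} of $H(\Sigma)$ forces the entire lower chain to be dimension-saturated, i.e.\ $\dim(\sigma_i)=i$ for all $i\le m$, with $\sigma_i$ consisting of the $i+1$ largest vertices of $\sigma_m$ (otherwise some single-vertex extension $\sigma_{i-1}\cup\{y\}$, which \emph{is} a candidate of $H(\Sigma)$, would beat $b(\sigma_m)$). Once the lower chain is saturated, no $\sigma^*$ can be inserted below $\sigma_p$ and the problematic cofacets do not exist. This saturation statement is exactly the content of the paper's Lemma~\ref{lem:p_lemma} and of claims (1)--(2) in its proof of Theorem~\ref{thm:bary_smooth} (the assertion $\sigma_i=p_i$ for all $i\le m$), which is the core of the paper's argument and the piece your proposal is missing. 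Adding that inductive claim to your structural step would close the gap and make your merged-slot case analysis complete.
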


\section{Greedy Matchings on Polyhedral Complexes}
\begin{definition}

In this section, we extend the greedy matchings beyond simplicial complexes.

A polyhedral complex in $\mathbb{E}^m$ is a set, $K$, consisting of a (finite) set of convex polytopes in $\mathbb{E}^m$ satisfying the
following conditions:

\begin{enumerate}
    \item Every face of a polytope in $K$ also belongs to $K$.
    \item For any two polytopes $\sigma_1$ and $\sigma_2 \in K$, if $\sigma_1 \cap \sigma_2 \neq \emptyset$,
then $\sigma_1 \cap \sigma_2 $ is a common face of both $\sigma_1$ and $\sigma_2$.
\end{enumerate}

\end{definition}


To deal with polyhedral complexes, we define a modified Hasse diagram where the function $f$ is taken under consideration.

\theoremstyle{definition}
\begin{definition} [Modified Hasse Diagram] \label{def:modified_hasse_diagram}
Given a simplicial complex $\Delta$, its Hasse diagram $H'$ is the directed graph $D$ where the set of nodes is the set of simplexes of $\Delta$ and the set of arcs in $H'$ is composed of all pairs such that $\{\sigma,\tau\} \in H'$ if and only if $\sigma\prec\tau$ and $f(\tau) < f(\sigma)$.
\end{definition}

Observe that for simplicial complexes, this definition does not reduces to Definition \ref{def:hasse_diagram}. This is not in fact a generalization of previous sections but an alternative when dealing with polyhedral complexes. Please note that, since arcs of the original Hasse diagram are missing, there might be two critical adjacent cells, which was impossible in the previous definitions in simplicial complex. 

If we consider this modified Hasse diagram, we can show that the greedy matching algorithm constructs a discrete gradient field with decreasing paths such as Theorem \ref{theo:decreasing_flow_smooth} and its follows that the matching produced is in fact a discrete gradient field.

\begin{theorem}\label{theo:decreasing_flow_modified}
In a discrete vector field $\mathcal{V}$ associated with a matching $M$ in the Modified Hasse diagram $H'$, for any $\mathcal{V}-path$ $\blacktriangleleft\sigma_0\tau_0\sigma_1\tau_1\dots\sigma_n\blacktriangleright$ we have that $f(\sigma_{0}) > f(\sigma_{1}) > \cdots > f(\sigma_n)$.
\end{theorem}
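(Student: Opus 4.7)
The goal is to establish $f(\sigma_0) > f(\sigma_1) > \cdots > f(\sigma_n)$ by showing the one-step inequality $f(\sigma_i) > f(\sigma_{i+1})$ for every $i \in \{0, \ldots, n-1\}$ and concatenating.

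The starting point at each hinge $\tau_i$ is the matched arc $\{\sigma_i, \tau_i\} \in \mathcal{V} \subseteq H'$. By Definition~\ref{def:modified_hasse_diagram}, the mere presence of this arc in $H'$ forces $f(\tau_i) < f(\sigma_i)$, so the matched step already contributes a strict decrease from $\sigma_i$ up to $\tau_i$.

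The next step is a case analysis on the face relation $\sigma_{i+1} \prec \tau_i$. If $\{\sigma_{i+1}, \tau_i\}$ is \emph{not} an arc of $H'$, then the defining inequality of $H'$ fails, so $f(\tau_i) \geq f(\sigma_{i+1})$; together with injectivity of the lexicographic extension of $f$, this promotes to $f(\sigma_{i+1}) < f(\tau_i) < f(\sigma_i)$, closing this case immediately. This mirrors the easy direction of Theorem~\ref{theo:decreasing_flow_smooth}, where the missing arc plays the role of the smoothness hypothesis: it is exactly what forbids the ``uphill'' unmatched continuation.

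The remaining case, where $\{\sigma_{i+1}, \tau_i\}$ \emph{is} an arc of $H'$, is where I expect greediness of $M$ to enter. Here both $f(\sigma_i) > f(\tau_i)$ and $f(\sigma_{i+1}) > f(\tau_i)$ hold, but $\sigma_i$ and $\sigma_{i+1}$ are not directly comparable. The plan is to extend the two-edge segment $\{\sigma_i, \tau_i\}, \{\sigma_{i+1}, \tau_i\}$ into a maximal alternating path by appending the matching edge $\{\sigma_{i+1}, \tau_{i+1}\} \in M$ at the $\sigma_{i+1}$ end (or leaving $\sigma_{i+1}$ as an unsaturated endpoint if it happens to be critical), and then to apply Theorem~\ref{thm:if_map_then_matched}: the minimum-weight edges of any such path must lie in $M$, so the unmatched $\{\sigma_{i+1}, \tau_i\}$ cannot be the minimum. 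Translating this through the weighting induced by $f$ on arcs of $H'$ then forces $f(\sigma_i) > f(\sigma_{i+1})$.

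The principal obstacle is precisely this second case: one must choose the weighting on arcs of $H'$ so that ``not the minimum of a maximal alternating path'' translates cleanly into the desired comparison between $f(\sigma_i)$ and $f(\sigma_{i+1})$, with tie-breaking that honours the direction of steepest descent. This is where the polyhedral setting genuinely departs from the simplicial one, since $\tau \setminus \sigma$ is no longer a single vertex and a careful lexicographic weight on the endpoints of each arc is required to make Theorem~\ref{thm:if_map_then_matched} yield exactly the monotonicity needed.
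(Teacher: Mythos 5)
Your first case and the overall ``one-step inequality plus concatenation'' structure agree with the paper, but your second case --- the one you yourself flag as the principal obstacle --- is a genuine gap, and the route you sketch would not close it. The observation you are missing is that the second case is vacuous. Under the lexicographic order of Definition~\ref{def:lex_order}, a facet $\sigma\prec\tau$ satisfies $f(\sigma)>f(\tau)$ exactly when $\min f(\sigma)>\max f(\tau)\setminus f(\sigma)$, i.e.\ when every vertex of $\sigma$ dominates every vertex of $\tau\setminus\sigma$ (in the simplicial case: when $\tau\setminus\sigma$ is the $f$-minimal vertex of $\tau$). At most one facet of $\tau_i$ can have this property: if $\sigma,\sigma'$ were two such facets, picking $v\in(\tau_i\setminus\sigma)\cap\sigma'$ and $w\in(\tau_i\setminus\sigma')\cap\sigma$ gives $f(w)>f(v)$ and $f(v)>f(w)$, a contradiction. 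Since $\{\sigma_i,\tau_i\}$ is matched it lies in $H'$, so $f(\sigma_i)>f(\tau_i)$ and $\sigma_i$ is that unique facet; as $\sigma_{i+1}\neq\sigma_i$, the arc $\{\sigma_{i+1},\tau_i\}$ cannot belong to $H'$, and only your first case ever occurs. This is precisely what the paper's proof means by ``it follows from the lexicographic order that $\sigma_{i+1}<\tau_i$'', yielding $f(\sigma_{i+1})<f(\tau_i)<f(\sigma_i)$ with no case analysis and no appeal to the matching algorithm.

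Beyond being unnecessary, the appeal to greediness in your second case is unavailable: the theorem is stated for an \emph{arbitrary} matching $M$ in $H'$, and the point of the modified Hasse diagram is that monotonicity is baked into which arcs exist, independently of how $M$ was produced. Theorem~\ref{thm:if_map_then_matched} presupposes a greedy matching, and even granting that, it only compares arc weights along a maximal alternating path; concluding that the unmatched arc $\{\sigma_{i+1},\tau_i\}$ is not of minimal weight would still not translate into $f(\sigma_i)>f(\sigma_{i+1})$ without an arc weighting on $H'$ that you have not specified and that the argument does not in fact need.
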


\begin{corollary}\label{theo:modified_is_gradient}
The discrete vector field $\mathcal{V}$ associated with a matching $M$ in the Modified Hasse diagram $H$ is a discrete gradient field.
\end{corollary}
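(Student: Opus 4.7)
The plan is to derive the corollary as an immediate contradiction argument built on top of Theorem~\ref{theo:decreasing_flow_modified}. Recall from Definition~\ref{def:discrete_gradient_field} that $\mathcal{V}$ fails to be a discrete gradient field precisely when it admits a non-trivial closed $\mathcal{V}$-path, that is, a $\mathcal{V}$-path $\blacktriangleleft\sigma_0\tau_0\sigma_1\tau_1\dots\sigma_n\blacktriangleright$ with $n > 0$ and $\sigma_n = \sigma_0$. So I would proceed by contradiction: assume such a closed $\mathcal{V}$-path exists inside the modified Hasse diagram $H'$.

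Applying Theorem~\ref{theo:decreasing_flow_modified} to this path yields the strict inequality chain $f(\sigma_0) > f(\sigma_1) > \cdots > f(\sigma_n)$, unconditionally; unlike Theorem~\ref{theo:decreasing_flow_smooth}, no criticality assumption on $\sigma_n$ or discrete-smoothness assumption on the complex is required here, because the orientation restriction $f(\tau) < f(\sigma)$ built into Definition~\ref{def:modified_hasse_diagram} was already exploited in the proof of Theorem~\ref{theo:decreasing_flow_modified}. Combining the strict chain with the closure condition $\sigma_n = \sigma_0$ gives $f(\sigma_0) > f(\sigma_0)$, which is impossible. Hence no non-trivial closed $\mathcal{V}$-path exists, and $\mathcal{V}$ satisfies the defining property of a discrete gradient field.

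There is essentially no obstacle at the level of the corollary: the genuine content has been absorbed into Theorem~\ref{theo:decreasing_flow_modified}, whose proof is where the asymmetry of $H'$ does the real work. Once strict monotonicity of $f$ along every $\mathcal{V}$-path is available, acyclicity follows for free by injectivity of $f$ on vertices (and hence on simplices under the lexicographic extension of Definition~\ref{def:lex_order}), because any return to the initial simplex would violate strict monotonicity.
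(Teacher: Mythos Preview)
Your argument is correct and matches the paper's own proof essentially verbatim: assume a non-trivial closed $\mathcal{V}$-path, invoke Theorem~\ref{theo:decreasing_flow_modified} to obtain a strictly decreasing chain $f(\sigma_0) > \cdots > f(\sigma_n)$, and contradict $\sigma_0 = \sigma_n$. There is nothing to add or correct.
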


\section{Application to CAT(0) Cubical Complexes} 
\label{sec:applications}
In this section, we will show that CAT(0) cubical complexes are collapsible (a result established in \cite{adiprasito2011metric} using convexity),  by applying the greedy matching on the modified Hasse diagram from the previous section. We will use a fully combinatorial description of the CAT(0) cubical complex recently developed in \cite{ardila2012geodesics}. We include their combinatorial description verbatim from \cite{ardila2012geodesics} for completeness. Please see the reference for examples and details.

Recall that a poset $P$ is locally finite if every interval $[i, j] = \{k \in P : i \leq j \leq k\}$ is finite, and it has finite width if every antichain (set of pairwise incomparable elements) is finite.

\theoremstyle{definition}
\begin{definition}  \cite{ardila2012geodesics} \label{def:PIP}
A poset with inconsistent pairs is a locally finite poset $P$ of finite width,
together with a collection of inconsistent pairs ${p, q}$, such that:
\begin{enumerate}

 \item If $p$ and $q$ are inconsistent, then there is no $r$ such that $r \geq p$ and $r \geq q$.
\item If $p$ and $q$ are inconsistent and $p'\geq p, q'\geq q$, then $p'$ and $q'$ are inconsistent.
\end{enumerate}

\end{definition}

\begin{figure}[ht] 
    \centering
    \includegraphics[scale=0.58]{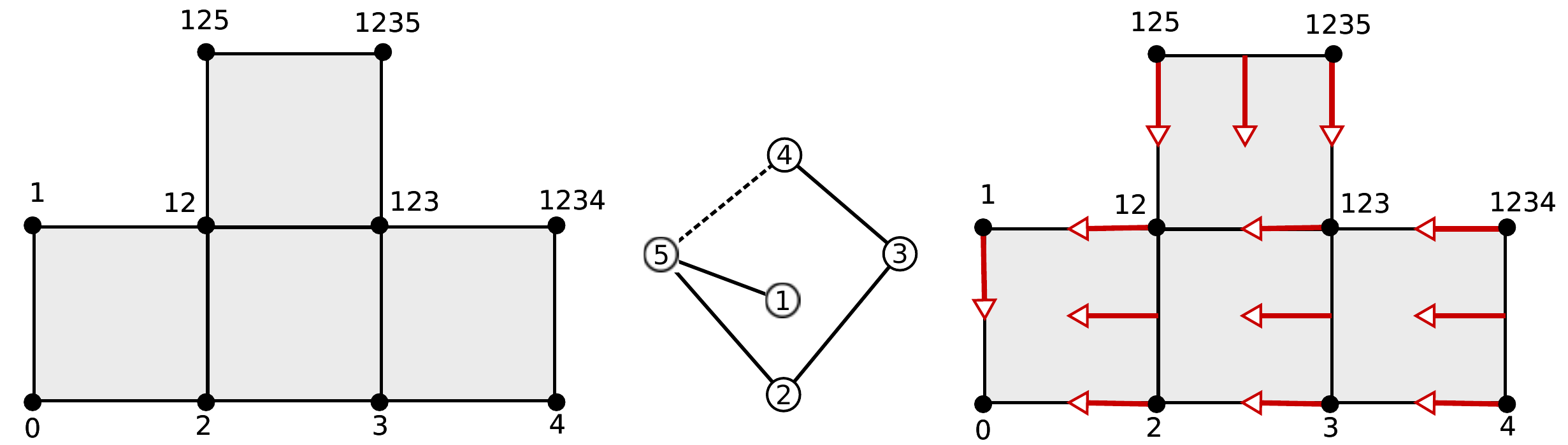}
    \caption{A rooted CAT(0) complex (left), built with Definition \ref{def:cat0_complex} from a Poset with Inconsistent Pairs (center), and the greedy matching defined in Theorem \ref{thm:cat0}.}
    \label{fig:cat0pip}
\end{figure}

In particular, notice that any two inconsistent elements must be incomparable.
A diagram of a poset with inconsistent pairs is obtained by drawing the poset, and connecting each minimal inconsistent pair with a dotted line. An inconsistent pair
$\{p, q\}$ is minimal if there is no other inconsistent pair ${p_0, q_0}$ with $p_0 \leq p$ and $q_0 \leq q$ (See Figure \ref{fig:cat0pip}).

\theoremstyle{definition} 
\begin{definition} \cite{ardila2012geodesics} \label{def:cat0_complex}
If $P$ is a poset with inconsistent pairs, we construct the cube complex
of $P$, which we denote $X_P$ . The vertices of $X_P$ are identified with the consistent order
ideals of $P$. There will be a cube $C(I, M)$ for each pair $(I, M)$ of a consistent order ideal
$I$ and a subset $M \subseteq I_{max}$, where $I_{max}$ is the set of maximal elements of $I$. This cube has dimension $|M|$, and its vertices are obtained by removing from $I$ the $2^{|M|}$ possible subsets of $M$. These cubes are naturally glued along their faces according to their labels.
\end{definition}

\begin{theorem}[Combinatorial description of CAT(0) cubical complexes.] \label{theo:combinatorial_cat0} \cite{ardila2012geodesics} 
There is a bijection between posets with inconsistent pairs and rooted CAT(0) cube complexes, given by the map $P \mapsto X_P$ .
\end{theorem}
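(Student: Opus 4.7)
The plan is to establish the bijection $P \mapsto X_P$ by constructing an explicit inverse and verifying both directions, using Gromov's combinatorial characterization of CAT(0) cube complexes (simply connected with flag links) together with Sageev's theory of hyperplanes.

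For the forward direction, I would first verify that Definition \ref{def:cat0_complex} produces a well-defined cube complex. The consistent order ideal $I$ serves as the root vertex, and each face of a cube $C(I,M)$ corresponds to a pair $(I\setminus S, M\setminus S)$ for $S \subseteq M$, so the face-gluing is coherent. The next and critical step is to prove that $X_P$ is CAT(0). By Gromov's theorem, it suffices to show that $X_P$ is simply connected and that the link of every vertex is a flag simplicial complex. Simple connectedness follows by building $X_P$ through a filtration indexed by the size of order ideals and showing that each attachment of a cube fills in a boundary of a sphere; the key is that any loop bounds a disk whose 2-cells correspond to commuting pairs of consistent ``toggles'' in the PIP. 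For the flag condition, the link of the vertex corresponding to $I$ is the simplicial complex whose vertices are the maximal elements of $I$ and whose simplices are subsets $M \subseteq I_{\max}$ such that every pair in $M$ is consistent (since a pair being inconsistent anywhere downward contradicts Definition \ref{def:PIP}(2)). Flagness then reduces to the statement that pairwise consistency inside $I_{\max}$ implies joint consistency, which is immediate from the condition that inconsistent pairs are upward-closed.

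For the inverse direction, given a rooted CAT(0) cube complex $(X,v_0)$, I would build a PIP $P_X$ whose elements are the hyperplanes of $X$ (in the sense of Sageev), ordered by $H \leq H'$ if every combinatorial geodesic from $v_0$ to a vertex separated from $v_0$ by $H'$ must cross $H$. A pair $\{H,H'\}$ is declared inconsistent if no vertex of $X$ lies on the side of both $H$ and $H'$ opposite to $v_0$. Local finiteness and finite width of $P_X$ come from the local finiteness of $X$ and the standard fact that hyperplanes crossing a common cube form an antichain of bounded size. One then verifies Definition \ref{def:PIP}: condition (1) is the defining inconsistency, and condition (2) is the upward-closure property of ``separating no common vertex from the root,'' which follows from the convexity of halfspaces in a CAT(0) cube complex. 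To close the loop, I would identify vertices of $X$ with consistent order ideals of $P_X$ via the map $v \mapsto \{H : H \text{ separates } v_0 \text{ from } v\}$, and check that cubes of $X$ correspond exactly to pairs $(I,M)$ as in Definition \ref{def:cat0_complex}; bijectivity then follows because both constructions are natural and are inverses vertex-by-vertex and cube-by-cube.

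The main obstacle, in my view, is the flag-link verification for $X_P$ combined with the correct bookkeeping of the hyperplane-to-PIP-element correspondence. The flag condition is where the upward-closure axiom (Definition \ref{def:PIP}(2)) is essential, and getting this step right requires a careful argument that pairwise consistency of maximal elements of $I$ implies that the whole set can be removed simultaneously to form a cube. The second subtle point is that the bijection must be compatible with the choice of root vertex: without fixing a basepoint, there is no canonical PIP, and the order on hyperplanes is defined relative to $v_0$. Once these points are handled, the remaining verifications reduce to straightforward checks on order ideals and cube faces.
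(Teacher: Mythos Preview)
The paper does not prove Theorem~\ref{theo:combinatorial_cat0}; it is quoted verbatim from \cite{ardila2012geodesics} purely as background for the application in Section~\ref{sec:applications}. There is therefore no ``paper's own proof'' to compare your proposal against --- the authors treat this bijection as a black box and only use it (together with Lemma~\ref{lem:helper}) to set up the greedy matching in Theorem~\ref{thm:cat0}.

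That said, your sketch is broadly the standard route taken in \cite{ardila2012geodesics} (Gromov's link condition for the forward direction, Sageev/Roller hyperplane theory for the inverse). One point to tighten: your description of the link of a vertex $I$ in $X_P$ is incomplete. You only list the ``downward'' directions, namely elements of $I_{\max}$, but the link also contains ``upward'' directions corresponding to minimal elements of $P\setminus I$ that are consistent with every element of $I$. The flag condition must be checked for \emph{all} such directions simultaneously (a mix of removals and additions), and the argument that pairwise compatibility implies a full cube requires a bit more than the upward-closure axiom alone --- you also need that the candidate new elements are pairwise consistent and that adding them still yields an order ideal. This is not hard, but it is where the actual work lies, and your current sketch glosses over it.
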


The next theorem was already proven in \cite{adiprasito2011metric} but here we give fully combinatorial proof. We will define a order on the cubes $C(I,M)$ of $X_P$, construct a discrete gradient field with the greedy matching on the modified Hasse diagram, and finally we will show that there is only one critical vertex in the discrete gradient field. Therefore the complex is collapsible.

\begin{theorem}\label{thm:cat0}
CAT(0) cube complexes are collapsible.
\end{theorem}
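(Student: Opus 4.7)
The plan is to construct an explicit injective function $f$ on the vertices of $X_P$, apply the greedy matching on the modified Hasse diagram of Definition~\ref{def:modified_hasse_diagram}, and show that the resulting discrete gradient field has exactly one critical cell, namely the vertex corresponding to the empty consistent order ideal (the root of $X_P$). Since a discrete gradient field with a unique critical $0$-cell witnesses collapsibility onto that cell, this will suffice.

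First, using that $P$ is locally finite and of finite width, I would fix a linear extension $p_1, p_2, \ldots$ of $P$ and set $w(p_i) := 2^i$. For each consistent order ideal $I$ (a vertex of $X_P$), define
\[
f(I) := \sum_{p \in I} w(p).
\]
This $f$ is injective, and by the geometric growth of the weights, $w(p) > \sum_{q \in I,\, q \neq p} w(q)$ whenever $p$ is the element of largest index in $I$. Extend $f$ to all cubes of $X_P$ using the lexicographic order of Definition~\ref{def:lex_order}, and run the greedy matching on the modified Hasse diagram $H'$. By Corollary~\ref{theo:modified_is_gradient}, the output is automatically a discrete gradient field; it remains only to identify its critical cells.

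Next, for every cube $C(I, M)$ with $I \neq \emptyset$, let $p^{\star}(I)$ denote the element of $I_{\max}$ of largest index in the linear extension. The key claim is that the greedy matching pairs $C(I, M)$ upward with $C(I, M \cup \{p^{\star}(I)\})$ when $p^{\star}(I) \notin M$, and downward with $C(I, M \setminus \{p^{\star}(I)\})$ when $p^{\star}(I) \in M$. These two rules are inverses, so every cube with $I \neq \emptyset$ lies in exactly one pair, and the only unmatched cube is the vertex corresponding to $I = \emptyset$. To verify the claim I would show that modifying $M$ through $p^{\star}(I)$ yields the minimum-weight arc of $H'$ incident to $C(I, M)$: because $w(p^{\star}(I))$ exceeds the sum of the remaining $w$-values in $I$, the vertices that distinguish the two cubes govern the lexicographic comparison in Definition~\ref{def:lex_order}, and all competing cofacets that enlarge the ideal (by adjoining some $q \notin I$) have strictly larger $f$-value and are therefore absent from $H'$ altogether.

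The main obstacle is precisely this last verification: carefully tracking how the lexicographic extension of $f$ from vertices to cubes translates into the arc weights of $H'$, and then confirming that the greedy selection rule at each iteration picks out the arc through $p^{\star}(I)$ rather than any other facet/cofacet pair. Once this weight analysis is settled, the matching is fully determined, only the root vertex remains critical, and the collapsibility of $X_P$ follows immediately.
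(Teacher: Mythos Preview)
Your strategy is exactly the paper's: exhibit the matching that pairs $C(I,M)$ with $C(I,M\cup\{p^\star\})$ or $C(I,M\setminus\{p^\star\})$, where $p^\star=\max I_{\max}$ in a fixed linear extension of $P$, check it is the greedy matching on the modified Hasse diagram, and conclude that only the empty ideal survives as a critical cell. The paper reaches the same matching, but it does \emph{not} go through a vertex function and the lexicographic extension of Definition~\ref{def:lex_order}. Instead it orders cubes directly by a shortlex rule on the labels: $(I,M)<(I',M')$ iff $I<_{sl}I'$, or $I=I'$ and $M>_{sl}M'$ (note the reversed comparison on $M$). With that choice the arc weight is taken to be the label of the opposite facet, and the four weight comparisons needed for Theorem~\ref{thm:if_map_then_matched} become one-line shortlex inequalities via Lemma~\ref{lem:helper}.

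Your binary-weight function $f(I)=\sum_{p\in I}2^{\mathrm{index}(p)}$ is a natural way to stay within the framework of Sections~\ref{sec:greedy_algorithm}--\ref{sec:discrete_gradient_field}, but it creates precisely the obstacle you flag: Definition~\ref{def:arc_weight} gives $\overline{\{\sigma,\tau\}}=f(\tau\setminus\sigma)$ as the value at a \emph{single} vertex, and for cubes $\tau\setminus\sigma$ has $2^{\dim\sigma}$ vertices, so you still owe a definition of the arc weight before the greedy comparisons can even be stated. Whatever convention you adopt (lex value of the opposite facet, of the set $\tau\setminus\sigma$, etc.), you then have to compare sets of $2^{|M|}$ vertex values under Definition~\ref{def:lex_order} rather than just labels, which is heavier than the paper's direct shortlex check. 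In short, your route is sound and lands on the same matching, but the paper's shortcut of ordering $(I,M)$ labels by shortlex is what dissolves your ``main obstacle'' into four immediate inequalities.
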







\bibliographystyle{plain}
\bibliography{main.bib}

\appendix
\section{Appendix}

\subsection{Proof of Theorem \ref{thm:if_map_then_matched}}

\begin{lemma} \label{lem:H_in_Ga}
If $H$ is a subgraph of $G$ such that $\forall v \in V(H)$ we have $\overline{v} \geq a$, then $H$ is a subgraph of $G_a$.
\end{lemma}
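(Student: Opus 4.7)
The plan is to unwind the definitions and check the two containments required by the subgraph relation. Recall that $G_a = G[V_a]$ is, by construction, the subgraph of $G$ induced on the vertex set $V_a = \{v \in V(G) : \overline{v} \geq a\}$. So to establish $H \subseteq G_a$, I need to verify that $V(H) \subseteq V(G_a)$ and $E(H) \subseteq E(G_a)$.

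First I would handle the vertex inclusion. The hypothesis states that every $v \in V(H)$ satisfies $\overline{v} \geq a$, which is literally the defining condition of membership in $V_a$. Hence $V(H) \subseteq V_a = V(G_a)$.

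Next I would handle the edge inclusion. Let $e = xy \in E(H)$. Since $H$ is a subgraph of $G$, we have $e \in E(G)$ and its endpoints $x, y \in V(H)$. By the vertex inclusion just established, $x, y \in V_a$. Because $G_a = G[V_a]$ is an \emph{induced} subgraph of $G$, it contains every edge of $G$ whose endpoints both lie in $V_a$; therefore $e \in E(G_a)$. Combining the two inclusions yields $H \subseteq G_a$, which is the conclusion.

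There is no real obstacle here: the lemma is essentially a bookkeeping statement, and the only subtlety is remembering that $G_a$ is defined as an \emph{induced} subgraph (so no edges on $V_a$ are missing from $G_a$), which is exactly what makes the edge inclusion automatic.
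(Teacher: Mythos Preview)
Your proof is correct and follows essentially the same route as the paper's own proof: verify $V(H)\subseteq V_a$ directly from the hypothesis, then use that $G_a$ is an \emph{induced} subgraph of $G$ to conclude that any edge of $H\subseteq G$ with both endpoints in $V_a$ lies in $E(G_a)$. There is no substantive difference between the two arguments.
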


\begin{proof}
If $H$ is a subgraph of $G$, then $V(H) \subseteq{V(G)}$ and $E(H) \subseteq {E(G)}$. In addition, if $\forall v \in V(H)$ we have $\overline{v} \geq a$, then $v \in {V_a}$. Thus, $V(H) \subseteq{V_a}$. Now consider an edge $e = \{x, y \} \in{E(H)}$. It is easy to verify that $\overline{x} \geq{a}$ and $\overline{y} \geq{a}$. This means that $x \in{V_a}$ and $e \in{V_a}$. Since $G_a$ is an induced subgraph of $G$ for $V_a$, it follows that $e = \{x, y \} \in{G_a}$, and therefore $E(H) \subseteq{E_a}$. Therefore, $H$ is a subgraph of $G_a$.
\end{proof}

\begin{lemma}\label{lem:v_geq_e}
Let $e \in {E_{min}(P)}$. If $P$ is a maximal alternating path, then $\forall v \in P$ we have $\overline {v} \geq \overline{e} $.
\end{lemma}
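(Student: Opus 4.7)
The plan is to handle each vertex $v$ of the path $P$ by a short case analysis on whether $v$ is saturated by the matching $M$. Fix a minimum-weight edge $e \in E_{min}(P)$, so by definition $\overline{e} \leq \overline{e'}$ for every edge $e'$ of $P$. The goal is then to show, for an arbitrary $v \in V(P)$, that $\overline{v} \geq \overline{e}$, where the saturation function $\overline{v}$ is the one introduced in Section~\ref{sec:greedy_algorithm}.

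First, if $v$ is unsaturated in $M$, the definition of the saturation function gives $\overline{v} = \infty$, and the inequality $\overline{v} \geq \overline{e}$ is immediate.

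Second, if $v$ is saturated in $M$, the key step is to invoke the defining property of a \emph{maximal} alternating path (Definition~\ref{def:maximal_alternating_path}): a saturated vertex on such a path must be saturated by an edge that lies in $P$ itself. Call this edge $e_v \in M \cap E(P)$; then $\overline{v} = \overline{e_v}$. Since $e_v$ is an edge of $P$ and $e$ was chosen to have minimum weight among edges of $P$, we conclude $\overline{v} = \overline{e_v} \geq \overline{e}$.

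There is no serious obstacle here; the proof is essentially a two-line case distinction. The only subtle point is reading the maximality condition correctly, namely that it forbids a saturated vertex of $P$ from being saturated by a matched edge lying \emph{outside} $P$ (otherwise the path would admit an extension at $v$ via that edge). Once that reading is in place, the inequality is forced by the choice of $e$ as a minimum-weight edge of $P$. This lemma will then feed directly into the proof of Theorem~\ref{thm:if_map_then_matched}, since it certifies that the endpoints of a minimum-weight edge $e$ on a maximal alternating path survive to the induced subgraph $G_{\overline{e}}$, allowing Lemma~\ref{lem:is_matched} to be applied.
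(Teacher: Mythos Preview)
Your proof is correct and follows essentially the same two-case argument as the paper: distinguish whether $v$ is unsaturated (so $\overline{v}=\infty$) or saturated by an edge of $P$ via the maximality condition (so $\overline{v}\geq\overline{e}$). Your added remark clarifying that maximality forces the saturating edge to lie in $P$ is a helpful gloss on Definition~\ref{def:maximal_alternating_path}, but the underlying argument is the same.
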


\begin{proof}
Consider $e \in{E_{min}(P)}$ as one of the edges of minimum weight in $P$. If $P$ is a maximal alternating path, then $\forall v \in P$ we have that $v$ is saturated by an edge of $P$ or $v$ is unsaturated (Definition~\ref{def:maximal_alternating_path}). Thus, $\forall v \in P$ follows that $\overline{v} \geq \overline{e}$ or $\overline{v} = \infty $, respectively.
\end{proof}

\bigskip
\noindent {\bf Proof of Theorem \ref{thm:if_map_then_matched}}
\begin{proof}
Let $e$ be an edge belonging to the set of edges of minimum weight in $P$, that is, $e \in{E_{min}(P)}$. If $P$ is a maximal alternating path, then $\forall v \in P$ we have $ \overline{v} \geq \overline{e}$ by Lemma \ref{lem:v_geq_e}. Furthermore, by Lemma ~\ref{lem:H_in_Ga} we have that $P$ is a subgraph of $G_{\overline{e}}$.
In particular $e \in P \subset G_{\overline{e}}(E)$. Finally we have that $e \in M$, by the Lemma~\ref{lem:is_matched}. It follows that $E_{min}(P)\subseteq{M}$.
\end{proof}


\subsection{Proofs of Theorems \ref{theo:is_gradient} and \ref{theo:decreasing_flow}}

As $\sigma_i$ and $\sigma_{i-1}$ are facets of $\tau_i$ in the $\mathcal{V}-path$, we will call as $v_i=\tau_i\setminus \sigma_i$ and $w_i=\tau_{i}\setminus \sigma_{i-1}$ two vertices. We can simplify those equalities checking that $\sigma_{i-1} \setminus \sigma_{i} = v_i$ and $\sigma_{i-1} \setminus \sigma_{i} = w_i$. This indicates that, roughly speaking, for any intermediate $\mathcal{V}-path$ defined as $\blacktriangleleft\sigma_{i-1}\tau_i\sigma_i \blacktriangleright$, we have that $\sigma_i$ obtained a vertex $v_i$ and lost a vertex $w_i$ in relation to the previous simplex $\sigma_{i-1}$. Thus, we will name as $V_n=\{v_0,v_1,v_2,\dots,v_{n-1}\}$ and $W_n=\{w_0,w_1,w_2,\dots,w_{n-1}\}$ two sets of vertices. The first one represents a set of vertices that was obtained throughout the path and the second one is a set of vertices that was lost throughout the path.

This means that, $f(V_n)$ is the set of weights of the matched edges and $f(W_n)$ is the set of weights of the unmatched edges in a $\mathcal{V}-path$.

The relations of those sets, $V$ and $W$, with the simplexes along a $\mathcal{V}-path$ are intuitive and consist of useful tools to prove results related with the discrete gradient field.

\begin{lemma}\label{lem:vpath_as_alternate_path_1} $V_n \setminus W_n \subseteq \sigma_n \setminus \sigma_0$ and $W_n \setminus V_n \subseteq \sigma_0 \setminus \sigma_n$.
\end{lemma}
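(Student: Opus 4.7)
The plan is to prove each inclusion by following an individual vertex through $\sigma_0, \sigma_1, \ldots, \sigma_n$ and exploiting the fact that each transition in a $\mathcal{V}$-path modifies exactly one vertex of the simplex. I would first record the kinematic identity $\sigma_{i+1} = (\sigma_i \setminus \{w_i\}) \cup \{v_i\}$, together with the facts $v_i \notin \sigma_i$, $v_i \in \sigma_{i+1}$, $w_i \in \sigma_i$, $w_i \notin \sigma_{i+1}$, and $v_i \neq w_i$, which follow from $v_i = \tau_i \setminus \sigma_i$, $w_i = \tau_i \setminus \sigma_{i+1}$, and $\sigma_i \neq \sigma_{i+1}$. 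From these relations one sees that the membership of a fixed vertex in $\sigma_j$ can change between indices $j$ and $j+1$ only if that vertex equals $v_j$ or $w_j$.

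For the first inclusion $V_n \setminus W_n \subseteq \sigma_n \setminus \sigma_0$, I would pick $x \in V_n \setminus W_n$ and let $k$ be the smallest index with $x = v_k$, so that $x \in \sigma_{k+1}$. Because $x \notin W_n$, $x$ is never removed later, and a trivial induction on $j \geq k+1$ gives $x \in \sigma_j$ for all such $j$; in particular $x \in \sigma_n$. For the other half, suppose towards contradiction that $x \in \sigma_0$; since $x = v_k \notin \sigma_k$, the vertex $x$ must exit $\sigma_j$ at some step $j < k$, and the only way for this to happen is $x = w_j$, contradicting $x \notin W_n$. Hence $x \in \sigma_n \setminus \sigma_0$.

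The second inclusion $W_n \setminus V_n \subseteq \sigma_0 \setminus \sigma_n$ follows from the dual argument. Taking $y = w_k \in W_n \setminus V_n$ gives $y \in \sigma_k$ and $y \notin \sigma_{k+1}$; since $y \notin V_n$, $y$ is never re-inserted later, so $y \notin \sigma_n$. Going backwards, $y \in \sigma_k$ together with the impossibility of $y$ having been added at any earlier step (otherwise $y \in V_n$) forces $y$ to be present in every $\sigma_j$ for $0 \leq j \leq k$, and in particular $y \in \sigma_0$.

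I do not expect a substantive obstacle: the proof is essentially bookkeeping on top of the single-vertex-modification structure of $\mathcal{V}$-paths. The only subtlety is that a given vertex may appear as $v_i$ for several indices $i$ (or as $w_j$ for several $j$), which is handled cleanly by choosing the smallest relevant index and invoking the ``one-vertex-at-a-time'' observation above. An alternative writeup is a short induction on $n$ that maintains both inclusions simultaneously, but the inductive step reduces to the same core case analysis, so the direct argument is likely the cleanest presentation.
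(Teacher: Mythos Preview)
Your proposal is correct and follows essentially the same approach as the paper: both arguments track a fixed vertex through the sequence $\sigma_0,\ldots,\sigma_n$, using that $y\notin W_n$ means membership is never lost (so $y\in\sigma_j\Rightarrow y\in\sigma_{j+1}$), which forward gives $y\in\sigma_n$ and, via its contrapositive applied backward, gives $y\notin\sigma_0$. Your version is somewhat more explicit (recording the kinematic identity, choosing the smallest index, and phrasing the $\sigma_0$ half as a contradiction), but the underlying logic is identical to the paper's two-statement combination.
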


\begin{proof}

As the proof is symmetrical, consider only one of the relations. If a vertex $y \in V_n \setminus W_n$ then $y \in V$ and $y \notin W$. Consider the two next statements.

\begin{enumerate}

\item If $y \in V_n$ then there are at least a pair of simplexes $\sigma_i$ and $\sigma_{i+1}$ for $0 \leq i \leq n-1$ in $P$ such that $\sigma_{i+1} \setminus \sigma_{i} = y$. This implies that $y \notin \sigma_{i}$ and $y \in \sigma_{i+1}$.

\item Similarly, if $y \notin W_n$ so there is no pair of simplexes $\sigma_{j}$ and $\sigma_{j+1}$ for $0 \leq j \leq n-1$ in $P$ such that $\sigma_{j} \setminus \sigma_{j+1} = y$. This means that if $y \in \sigma_j \rightarrow y \in \sigma_{j+1}$.

\end{enumerate}

Combining the two statements, we conclude that $y \notin \sigma_0$ and $y \in \sigma_n$.
\end{proof}

\begin{corollary}\label{cor:vpath_as_alternate_path_3} For a non-trivial closed $\mathcal{V}-path$, we have that $V_n=W_n$.
\end{corollary}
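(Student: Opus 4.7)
The plan is to derive this corollary as an immediate consequence of Lemma \ref{lem:vpath_as_alternate_path_1}. The definition of a non-trivial closed $\mathcal{V}$-path requires $n>0$ and $\sigma_n=\sigma_0$, so both set differences $\sigma_n\setminus\sigma_0$ and $\sigma_0\setminus\sigma_n$ are empty. Feeding this into the two containments established by the lemma collapses everything to the desired equality.

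Concretely, first I would invoke the hypothesis $\sigma_n=\sigma_0$, which immediately yields $\sigma_n\setminus\sigma_0=\emptyset$ and $\sigma_0\setminus\sigma_n=\emptyset$. Then I would apply Lemma \ref{lem:vpath_as_alternate_path_1} to both inclusions: the first gives $V_n\setminus W_n\subseteq\emptyset$, hence $V_n\subseteq W_n$, and the second gives $W_n\setminus V_n\subseteq\emptyset$, hence $W_n\subseteq V_n$. The double inclusion yields $V_n=W_n$.

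The argument is entirely formal once the lemma is in hand, so there is no real obstacle. The only subtle point worth flagging is that the hypothesis of being a \emph{non-trivial} closed path is used only to ensure that the sets $V_n$ and $W_n$ are well-defined (i.e.\ there is at least one index $i$, so $n\geq 1$); the substantive content comes purely from the closure condition $\sigma_0=\sigma_n$. In particular, this corollary provides the intuitive statement that along a closed $\mathcal{V}$-path every vertex gained on some step must be lost on another (and vice versa), which is precisely the combinatorial handle used later to derive contradictions whenever one assumes a closed $\mathcal{V}$-path exists in a greedy matching.
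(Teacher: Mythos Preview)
Your proof is correct and follows exactly the same argument as the paper: use $\sigma_0=\sigma_n$ to make both set differences empty, then apply Lemma~\ref{lem:vpath_as_alternate_path_1} to conclude $V_n\setminus W_n=\emptyset$ and $W_n\setminus V_n=\emptyset$, hence $V_n=W_n$.
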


\begin{proof}

If a $\mathcal{V}-path$ denoted by $\blacktriangleleft\sigma_0\tau_0\sigma_1\tau_1\dots\sigma_n\blacktriangleright$ is non-trivial closed, then $n > 0$ and $\sigma_0 = \sigma_n$. Thus, we have that $V_n \setminus W_n \subseteq \sigma_n \setminus \sigma_0 = \emptyset$ and $W_n \setminus V_n \subseteq \sigma_0 \setminus \sigma_n = \emptyset$. This means that $V_n=W_n$

\end{proof}

The corollary shows that the set of all the lost vertices and the set of all the obtained vertices in a non-trivial and closed $\mathcal{V}-path$ are the same. This is not a surprise since $\sigma_0 = \sigma_n$.

\bigskip

\noindent {\bf Proof of Theorem \ref{theo:is_gradient}}
\begin{proof}
Suppose by contradiction that $\mathcal{V}$ is not a discrete gradient field. Therefore, by the Definition~\ref{def:discrete_gradient_field}, there exists a closed non-trivial $\mathcal{V}-path$ denoted by $$P=\{\sigma_0,\tau_0\}\{\sigma_1,\tau_0\}\{\sigma_1,\tau_1\}\{\sigma_2,\tau_1\}\dots\{\sigma_{n-1},\tau_{n-1}\}\{\sigma_{n},\tau_{n-1}\}$$ where $\sigma_0 = \sigma_n$. Because $P$ is an alternating cycle, it is also a maximal alternating path, then $E_{min}(P) \subset M$ (Theorem~\ref{thm:if_map_then_matched}). Since $f(V_n)$ is the set of weights of the matched edges in $P$ and $f(W_n)$ is the set of weights of the unmatched edges in $P$, we have that $\min f(V_n \cup W_n) \in f(V_n \setminus W_n)$ which also belongs to $f(\sigma_n \setminus \sigma_0)$ (Lemma~\ref{lem:vpath_as_alternate_path_1}). As $\sigma_0 = \sigma_n$, then $\sigma_n \setminus \sigma_0 = \emptyset$. Contradiction.
\end{proof}

\begin{lemma}\label{lem:vpath_as_alternate_path_2} 
$\sigma_0 \setminus \sigma_n \subseteq W_n$ and $\sigma_n \setminus \sigma_0 \subseteq V_n$
\end{lemma}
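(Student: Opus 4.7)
The plan is to prove both inclusions by tracking the presence of a fixed vertex along the sequence $\sigma_0,\sigma_1,\dotsc,\sigma_n$. The key structural fact I would use is that, at each step of a $\mathcal{V}$-path, $\sigma_i$ and $\sigma_{i+1}$ are both facets of the same simplex $\tau_i$ of one higher dimension, so they have equal cardinality and differ by exactly two vertices: $v_i = \tau_i \setminus \sigma_i$ is gained and $w_i = \tau_i \setminus \sigma_{i+1}$ is lost when going from $\sigma_i$ to $\sigma_{i+1}$. Consequently, $\sigma_{i+1} \setminus \sigma_i = \{v_i\}$ and $\sigma_i \setminus \sigma_{i+1} = \{w_i\}$.

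For the first inclusion, fix $x \in \sigma_0 \setminus \sigma_n$, so the indicator of whether $x \in \sigma_i$ equals $1$ at $i=0$ and $0$ at $i=n$. Since a $\{0,1\}$-valued sequence that starts at $1$ and ends at $0$ must flip down somewhere, there exists an index $i$ with $x \in \sigma_i$ and $x \notin \sigma_{i+1}$. By the observation above, the only vertex removed at this transition is $w_i$, so $x = w_i \in W_n$. This establishes $\sigma_0 \setminus \sigma_n \subseteq W_n$. The symmetric inclusion $\sigma_n \setminus \sigma_0 \subseteq V_n$ follows by the same argument with the roles of $0$ and $n$ reversed: for $x \in \sigma_n \setminus \sigma_0$, the indicator starts at $0$ and ends at $1$, so at some transition $x \notin \sigma_i$ while $x \in \sigma_{i+1}$, forcing $x = v_i \in V_n$.

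I do not expect a real obstacle here; the statement is almost a converse of Lemma \ref{lem:vpath_as_alternate_path_1} and follows from the same two-vertex symmetric-difference bookkeeping. The only point to state carefully is that consecutive simplices in a $\mathcal{V}$-path truly differ by exactly one gained and one lost vertex, which is immediate from the facet relations $\sigma_i \prec \tau_i \succ \sigma_{i+1}$ together with $\sigma_i \neq \sigma_{i+1}$, and the trivial case $n=0$ makes both sides empty.
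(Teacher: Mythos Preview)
Your proposal is correct and follows essentially the same approach as the paper: both arguments fix a vertex in $\sigma_0\setminus\sigma_n$ (resp.\ $\sigma_n\setminus\sigma_0$), observe that its membership must change at some consecutive pair $\sigma_i,\sigma_{i+1}$, and conclude it equals the corresponding $w_i$ (resp.\ $v_i$). Your write-up is in fact a bit more explicit than the paper's, spelling out the two-vertex symmetric-difference structure of consecutive $\sigma_i$'s and the indicator-flip reasoning.
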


\begin{proof}

If a vertex $y \in \sigma_0 \setminus \sigma_n$ then $y \in \sigma_0$ and $y \notin \sigma_n$. This means that exist a pair of simplexes $\sigma_i$ and $\sigma_{i+1}$ where $0 \leq i \leq n-1$ such that $y \in \sigma_i$ and $y \notin \sigma_{i+1}$. Thus, $y \in W_n$. Similarly, if a vertex $y \in \sigma_n \setminus \sigma_0$ then $y \in \sigma_n$ and $y \notin \sigma_0$. This means that exist a pair of simplexes $\sigma_{i+1}$ and $\sigma_{i}$ where $0 \leq i \leq n-1$ such that $y \in \sigma_{i+1}$ and $y \notin \sigma_{i}$. Thus, $y \in V_n$.

\end{proof}

This last lemma simply indicates that if a vertex is not in the initial simplex of the $\mathcal{V}-path$ but it is in the final simplex, then this vertex is in the set of obtained vertices. Similarly, if a vertex is in the initial simplex but is not in the final simplex, then this vertex is in the set of lost vertices.

\bigskip
\noindent {\bf Proof of Theorem \ref{theo:decreasing_flow}}
\begin{proof}
Consider a $\mathcal{V}-path$ denoted by $$P=\{\sigma_0,\tau_0\}\{\sigma_1,\tau_0\}\{\sigma_1,\tau_1\}\{\sigma_2,\tau_1\}\dots\{\sigma_{n-1},\tau_{n-1}\}\{\sigma_{n},\tau_{n-1}\}$$ with $\sigma_n$ critical. Because $\sigma_n$ is critical, then $P$ is a maximal alternating path, then $E_{min}(P) \subset M$ (Theorem~\ref{thm:if_map_then_matched}). Since $f(V_n)$ is the set of weights of the matched edges in $P$ and $f(W_n)$ is the set of weights of the unmatched edges in $P$, we have that $\min f(V_n \cup W_n) \in f(V_n \setminus W_n)$ which also belongs to $f(\sigma_n \setminus \sigma_0)$ (Lemma~\ref{lem:vpath_as_alternate_path_1}). As $\min f(V_n \cup W_n) \leq \min f((\sigma_0 \setminus \sigma_n) \cup (\sigma_n \setminus \sigma_0) )$ (Lemma \ref{lem:vpath_as_alternate_path_2}, then $\min f((\sigma_0 \setminus \sigma_n) \cup (\sigma_n \setminus \sigma_0) ) \in f(\sigma_n \setminus \sigma_0)$, this means that $f(\sigma_n) < f(\sigma_0)$.
\end{proof}

\subsection{Proof of Theorem \ref{thm:min_lemma_2}}

\begin{lemma}\label{lem:H_weights}
If $\sigma \subseteq \tau$, then $H(\tau) \subseteq H(\sigma)$ and $h_f(\sigma) \leq h_f(\tau)$. Also $\overline{\sigma} \geq h_f(\sigma)$.
\end{lemma}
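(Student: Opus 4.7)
The plan is to split the lemma into its three assertions and handle them in the order they appear, since each subsequent claim essentially follows from the previous one.

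First I would prove the inclusion $H(\tau) \subseteq H(\sigma)$ by a case analysis on how a vertex $v \in H(\tau)$ arises. By the definition, either $\tau \setminus v \in \Delta$ (so $v \in \tau$ and removing it yields a face) or $\tau \cup v \in \Delta$ (so $v \notin \tau$ and adding it yields a cosimplex). In the first case, if $v \in \sigma$ then $\sigma \setminus v \subseteq \tau \setminus v \in \Delta$, so $\sigma \setminus v \in \Delta$ by closure of $\Delta$ under subsets; if $v \notin \sigma$, then $\sigma \cup v \subseteq \tau \in \Delta$, again giving $\sigma \cup v \in \Delta$. In the second case, $v \notin \sigma$ (since $\sigma \subseteq \tau$) and $\sigma \cup v \subseteq \tau \cup v \in \Delta$, so $\sigma \cup v \in \Delta$. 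In every subcase $v \in H(\sigma)$, which gives the claimed inclusion.

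Next, the inequality $h_f(\sigma) \leq h_f(\tau)$ (interpreted, as elsewhere in the paper, via the $f$-values of the involved vertices) is immediate: since $H(\tau) \subseteq H(\sigma)$, the minimum of $f$ over the smaller set $H(\tau)$ cannot be less than the minimum over $H(\sigma)$, so $f(h_f(\sigma)) \leq f(h_f(\tau))$.

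Finally, for $\overline{\sigma} \geq h_f(\sigma)$ (again comparing via $f$), I would unfold the saturation function. If $\sigma$ is unsaturated, $\overline{\sigma} = \infty$ and the inequality is trivial. If $\sigma$ is saturated by some edge $e = \{\sigma, \rho\}$ in the Hasse diagram, then $\overline{\sigma} = \overline{e} = f(v)$, where $v$ is the unique vertex in the symmetric difference of $\sigma$ and $\rho$; since $\rho$ is either a facet or a cofacet of $\sigma$, this vertex satisfies either $\sigma \setminus v = \rho \in \Delta$ or $\sigma \cup v = \rho \in \Delta$, so $v \in H(\sigma)$ and hence $f(v) \geq \min f(H(\sigma)) = f(h_f(\sigma))$.

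No step looks particularly hard; the only place that requires a bit of care is the first inclusion, where one has to remember that $H(\sigma)$ mixes the facet direction (vertices removed from $\sigma$) with the cofacet direction (vertices added to $\sigma$), and the two cases have to be crossed with whether the relevant vertex already lies in $\sigma$ or not. Everything else is essentially a rewriting of definitions.
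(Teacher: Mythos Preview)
Your proof is correct and is exactly the detailed version of what the paper does: the paper's own proof consists solely of the sentence ``Trivial, just set operations,'' and your case analysis on $H(\tau)\subseteq H(\sigma)$ together with the immediate consequences for $h_f$ and $\overline{\sigma}$ is precisely the routine set-theoretic unpacking the authors left implicit.
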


\begin{proof}
Trivial, just set operations.
\end{proof}

\bigskip

\noindent {\bf Proof of Theorem \ref{thm:min_lemma_2}}
\begin{proof}
With Lemma \ref{lem:H_weights} and Definition \ref{def:weights_of_h}, we have that $\overline{\{\sigma,\tau\}}= f(\tau \setminus \sigma) = h_f(\sigma) \leq \overline{\sigma}$ and $\overline{\{\sigma,\tau\}}= f(\tau \setminus \sigma) = h_f(\tau) \leq \overline{\tau}$. By Lemma \ref{thm:if_map_then_matched} we can conclude that, $\sigma \rightarrow \tau$.
\end{proof}


\subsection{Proofs of Theorems \ref{thm:smooth}, \ref{thm:critical_above}, and \ref{thm:critical_below}}

\bigskip
\noindent {\bf Proof of Theorem \ref{thm:smooth}}
\begin{proof} $\implies$ Theorem \ref{thm:min_lemma_2}.

$\impliedby$ First suppose $h_f(\sigma) \in \sigma$, since $\sigma$ is smooth, then by Definition \ref{def:smooth}, $\tau \setminus h_f(\sigma)=h_f(\sigma) \notin \tau \setminus h_f(\sigma)$. By Theorem \ref{thm:min_lemma_2}, $\tau \setminus h_f(\sigma) \rightarrow \tau \setminus h_f(\sigma) \cup h_f(\sigma)=\tau$. That is a contradiction, since $\tau$ is already matched with $\sigma$ and $\tau \setminus h_f(\sigma) \neq \sigma$, since $h_f(\sigma) \in \sigma$. Therefore $h_f(\sigma) \notin \sigma$ and it follows that  there exists$\tau' \succ \sigma$ such that $\tau' \setminus \sigma = h_f(\sigma)$. By Theorem \ref{thm:min_lemma_2}, $\sigma \rightarrow \tau'$. Since it is a matching $\tau'=\tau$.
\end{proof}

\bigskip

\noindent {\bf Proof of Theorem \ref{thm:critical_above}}
\begin{proof} Suppose there exists a facet $\rho \prec \sigma$ such that $\rho$ is not matched above. Since $\rho$ is not critical by Corollary \ref{cor:critical_smooth}, $\rho$ is matched below with some $\theta$. By Theorem \ref{thm:smooth}$, \rho \setminus \theta=h_f(\theta)$. Also since $\theta \subseteq \rho \subseteq \sigma$, $\theta \subseteq \sigma \setminus (\rho \setminus \theta)$. Therefore, since $\rho \setminus \theta =h_f(\theta)$, by Theorem \ref{thm:smooth} $\sigma \setminus (\rho \setminus \theta) \rightarrow \sigma$, which is a contradiction since $\sigma$ is critical.
\end{proof}

\bigskip

\noindent {\bf Proof of Theorem \ref{thm:critical_below}}
\begin{proof}
Since $\sigma$ is critical, then $h_f(\sigma) \in \sigma$ by Theorem \ref{thm:min_lemma_2}. Since $\sigma$ is smooth, by Definition \ref{def:smooth} for all $\tau \succ \sigma, h_f(\tau \setminus h_f(\sigma))=h_f(\sigma)=\tau \setminus (\tau \setminus h_f(\sigma))$. Therefore, by Theorem \ref{thm:smooth}  $\tau  \setminus h_f(\sigma) \rightarrow \tau$.

\end{proof}

\subsection{Proof of Theorem \ref{thm:bary_smooth}}

\begin{definition}
If $\Sigma= (\sigma_0, \sigma_1, \ldots, \sigma_p)$, then let $\Sigma_i= (\sigma_0, \sigma_1, \ldots, \sigma_i)$ and $p_i=\min H(\Sigma_i) \setminus \Sigma_i$.
\end{definition}

\begin{lemma}\label{lem:p_lemma}
$p_i= \sigma_{i-1} \cup \min \sigma_{i} \setminus \sigma_{i-1}$.
\end{lemma}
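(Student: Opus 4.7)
The aim is to identify $p_i = b(w)$ where $w := \sigma_{i-1} \cup \{v\}$ and $v := \arg\min_{u \in \sigma_i \setminus \sigma_{i-1}} f(u)$. First I would verify membership: $w$ is a subset of $\sigma_i$, hence a simplex of $\Delta$, and since $\sigma_{i-1} \subsetneq w \subseteq \sigma_i$, the vertex $b(w)$ slots properly into the chain between $b(\sigma_{i-1})$ and $b(\sigma_i)$ (the relevant case being $w \neq \sigma_i$, i.e., $|\sigma_i \setminus \sigma_{i-1}| \geq 2$, in which the gap between $b(\sigma_{i-1})$ and $b(\sigma_i)$ actually admits insertable vertices).

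Then I would establish lex-minimality by a case analysis over an arbitrary candidate $\tau$ with $b(\tau) \in H(\Sigma_i) \setminus \Sigma_i$, comparing $w$ and $\tau$ via Definition~\ref{def:lex_order}. The principal case is ``same gap'': $\sigma_{i-1} \subsetneq \tau \subsetneq \sigma_i$, so $\tau = \sigma_{i-1} \cup S$ for some $\emptyset \neq S \subsetneq \sigma_i \setminus \sigma_{i-1}$ with $S \neq \{v\}$. If $v \in S$, then $w \subsetneq \tau$ and the comparable clause reduces $w > \tau$ to $\min f(w) > \max f(\tau \setminus w) = \max f(S \setminus \{v\})$; but by the choice of $v$, $\max f(S \setminus \{v\}) > f(v) \geq \min f(w)$, so the inequality fails and $w < \tau$. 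If $v \notin S$, then $w$ and $\tau$ are set-incomparable with symmetric difference $\{v\} \cup S$; since every element of $S \subseteq \sigma_i \setminus \sigma_{i-1}$ has $f$-value strictly above $f(v)$, the $f$-maximum of the symmetric difference lies in $S \subseteq \tau$, and the incomparable clause yields $\tau > w$.

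The case $\tau \supsetneq \sigma_i$ follows the same template: $\sigma_i \setminus \sigma_{i-1} \setminus \{v\} \subseteq \tau \setminus w$ contributes an $f$-value strictly above $\min f(w)$, so the comparable clause forces $w < \tau$. What I expect to be the main obstacle is the remaining case in which $\tau$ occupies an earlier gap of the chain, so $\tau \subsetneq \sigma_j$ for some $j \leq i-1$: here $\tau \subsetneq w$ and the extra vertices of $w \setminus \tau$ (which include elements of $\sigma_{i-1}$) may carry arbitrarily large $f$-values, so the comparable clause does not directly give $\tau > w$. I expect the paper to discharge this via the inductive context surrounding the lemma in the proof of Theorem~\ref{thm:bary_smooth}, where the chain $\Sigma$ and the index $i$ are set up so that the minimization over $H(\Sigma_i) \setminus \Sigma_i$ is effectively confined to the last gap, and the computation above then pins $p_i$ down to $w$.
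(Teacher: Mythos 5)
Your two-sided strategy (exhibit $w$ as a member of $H(\Sigma_i)\setminus\Sigma_i$ to get $p_i\leq w$, then argue lex-minimality over all candidates to get $p_i\geq w$) is exactly the paper's, and your treatment of the same-gap case and the case $\tau\supsetneq\sigma_i$ is correct and considerably more explicit than the paper's one-line ``by the lex order''. The obstacle you flag in the earlier-gap case is real, and you should not expect it to dissolve: the lemma as literally stated is false. Take $\Delta$ to be the $4$-simplex on $\{a,b,c,d,e\}$ with $f(a)>f(b)>f(c)>f(d)>f(e)$ and $\Sigma_2=(a,\ abc,\ abcde)$. The insertable simplices are $ab,ac$ (earlier gap) and $abcd,abce$ (last gap); the formula predicts $p_2=abc\cup e=abce$, but $\min f(ac)=f(c)\not> f(b)=\max f(abce\setminus ac)$, so $ac<abce$ in the lexicographic order and in fact $p_2=ac$. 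The paper's own proof buries this case inside the unjustified assertions $p_i\setminus x\subseteq\sigma_i\setminus\sigma_{i-1}$ and $x\setminus p_i=\min\sigma_i\setminus\sigma_{i-1}$, which already presuppose $\sigma_{i-1}\subseteq p_i\subseteq\sigma_i$, i.e.\ that $p_i$ lives in the last gap --- precisely what fails above.

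Your proposed repair is the right one. In the proof of Theorem~\ref{thm:bary_smooth} the lemma is only invoked together with the inductively established facts $\sigma_j=p_j$ and $\dim(\sigma_j)=j$ for $j<i$; under those hypotheses every earlier gap is empty (consecutive links of the chain differ by a single vertex, and $\sigma_0$ is a vertex, so nothing is insertable below $\sigma_{i-1}$), and your two surviving cases pin $p_i$ down to $w$. So restate the lemma with the hypotheses $\dim(\sigma_j)=j$ for all $j\leq i-1$ and $|\sigma_i\setminus\sigma_{i-1}|\geq 2$ (so that $w\neq\sigma_i$ and $w$ genuinely lies in $H(\Sigma_i)\setminus\Sigma_i$); with that, your argument is complete and, unlike the paper's, is actually a proof.
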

\begin{proof}
Let $x = \sigma_{i-1} \cup \min \sigma_{i} \setminus \sigma_{i}$. Since $\sigma_{i} \setminus \sigma_{i-1} \neq \emptyset$, then $\sigma_{i-1} \subset x$. It is easy to see that $\sigma_{i-1} \subset x \subseteq \sigma_{i}$. Therefore $x\in H(\Sigma_i) \setminus \Sigma_i$ and it follows that $p_i \leq x$. In addition, $p_i \setminus x \subseteq \sigma_{i} \setminus \sigma_{i-1}$ and $x \setminus p_i=\min \sigma_{i} \setminus \sigma_{i-1}$. Therefore, by the lex order, $p_i \geq x$.
\end{proof}

\bigskip

\noindent {\bf Proof of Theorem \ref{thm:bary_smooth}}
\begin{proof}
Let $\sigma_m=h_f(\Sigma) \in \Sigma$ and $\sigma''= h_f(\Sigma')$ where $\Sigma'=T \setminus h_f(\Sigma)$, for some $T \succ \Sigma$. To prove discrete smoothness, we must show that $\sigma_m=\sigma''$. We will first show that for all $i\leq m, \sigma_i=p_i$ (1), $\dim(\sigma_i)=i$ (2), and $\Sigma_i=\Sigma'_i$ (3):

\begin{enumerate}
    \item $\sigma_i=p_i$. Suppose there exists $i \leq m$ such that $\sigma_i \neq p_i$. Therefore we know, by Lemma \ref{lem:p_lemma}, that $\sigma_{i-1} \subset p_i \subset \sigma_i$. It follows that $p_i \notin \Sigma$, but we know that $p_i \in H(\Sigma)$. Therefore $p_i \in H(\Sigma) \setminus \Sigma$ and $p_i \neq \sigma_m \in \Sigma$. Finally we have that $p_i<\sigma_m $.
    On the other hand, since $i \leq m$, we know that $\sigma_m \in H(\Sigma_i) \setminus \Sigma_i$. Therefore $p_i \leq \sigma_m$, a contradiction. 
    \item $\dim(\sigma_i)=i$. It follows from Lemma \ref{lem:p_lemma} and Item 1 that $1=\dim(p_0)=\dim(\sigma_0)$ and $\dim(\sigma_i)=\dim(p_i)=\dim(\sigma_{i-1})+1$. By induction, $\dim(\sigma_i)=i$.
\end{enumerate}

 Now suppose $\dim(\sigma'') \geq m$, then $\sigma'' \notin \Sigma'_m$. Therefore $\sigma'' \in H(\Sigma_m)\setminus \Sigma_m$. Then $p'_m \leq \sigma''$. Since $\sigma_m \in H(\Sigma')$, it is easy to see that $\sigma_m=p_m$, and $\Sigma_m=\Sigma'_m$. We know that $\sigma'' \leq \sigma_m = p_m = p'_m \leq \sigma''$. Finally we have $h_f(\tau \setminus h_f(\sigma))=h_f(\Sigma')=\sigma''=\sigma_m=h_f(\Sigma)$.
\end{proof}

\subsection{Proofs of Theorems \ref{theo:decreasing_flow_modified} and \ref{theo:modified_is_gradient}}

\bigskip

\noindent {\bf Proof of Theorem \ref{theo:decreasing_flow_modified}}

\begin{proof}
Consider a $\mathcal{V}-path$ denoted by $\blacktriangleleft\sigma_0\tau_0\sigma_1\tau_1\dots\sigma_n\blacktriangleright$. Since the arcs $\{\tau_i \ \sigma_i \}$ are matched and, therefore, exist in the Hasse Diagram, by Definition~\ref{def:modified_hasse_diagram} we have that $\tau_i < \sigma_i$. It follows from the lexicographic order (Definition~\ref{def:lex_order}) that $\sigma_{i+1} < \tau_i$ for all $0 \leq i \leq n-1$ . This implies that $\sigma_{i+1} < \tau_i < \sigma_i$. By simple induction, is guaranteed that the $\mathcal{V}-path$ is strictly decreasing.
\end{proof}

\bigskip
\noindent {\bf Proof of Theorem
\ref{theo:modified_is_gradient}}

\begin{proof}
Suppose by contradiction that $\mathcal{V}$ is not a discrete gradient field. Therefore, by the Definition~\ref{def:discrete_gradient_field}, there exists a closed non-trivial $\mathcal{V}-path$ denoted by $\blacktriangleleft\sigma_0\tau_0\sigma_1\tau_1\dots\sigma_n\blacktriangleright$ where $\sigma_0 = \sigma_n$. As the sequence of simplexes are strictly decreasing, $\sigma_0 = \sigma_n$ represents a contradiction.
\end{proof}

\subsection{Proof of Theorem \ref{thm:cat0}}
The following lemma is taken verbatim from \cite{ardila2012geodesics} to help prove Theorem \ref{thm:cat0}.

\begin{lemma}\label{lem:helper} Let $J$ be a consistent order ideal and let $N \subseteq J_{max}$. The faces of the cube
$C(J, N)$ in the cubical complex $X_P$ are the $3^{|N|}$
cubes $C(J \setminus N_1, N \setminus N_1 \setminus N_2)$, where $N_1$
and $N_2$ are disjoint subsets of N. The maximal cubes in $X_P$ correspond to the maximal consistent antichains $A$ of $P$.
\end{lemma}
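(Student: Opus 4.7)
The plan is to deduce both assertions directly from Definition~\ref{def:cat0_complex}, using only elementary cube combinatorics. I would begin by fixing the identification of the vertex set of $C(J,N)$ with $\{J \setminus S : S \subseteq N\}$ via the bijection $S \mapsto J \setminus S$. Under this identification, each element $e \in N$ plays the role of an independent coordinate direction of an $|N|$-dimensional cube: two vertices differ across the $e$-axis exactly when their indexing subsets differ by the singleton $\{e\}$. Faces of an $n$-cube correspond bijectively to ordered pairs $(N_1,N_2)$ of disjoint subsets of $N$, where $N_1$ lists the coordinates frozen to ``removed'', $N_2$ lists the coordinates frozen to ``kept'', and the remaining coordinates are free; this accounts for exactly $3^{|N|}$ faces. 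The face selected by $(N_1,N_2)$ has vertex set $\{J \setminus (N_1 \cup S) : S \subseteq N \setminus N_1 \setminus N_2\}$, which coincides with the vertex set of $C(J \setminus N_1,\, N \setminus N_1 \setminus N_2)$.

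Next, I would verify that each candidate face is a legitimate cube of $X_P$: since $N_1 \subseteq J_{\max}$, the set $J \setminus N_1$ is an order ideal (no non-maximal element is removed along with $N_1$); it inherits consistency because any subset of a consistent set is consistent; and $N \setminus N_1 \setminus N_2 \subseteq (J \setminus N_1)_{\max}$ because its elements were already maximal in $J$ and nothing strictly above them survives in $J \setminus N_1$. For the second assertion, I would argue via two matching observations. On the one hand, for any cube $C(J,N)$ the set $N \subseteq J_{\max}$ is a consistent antichain of $P$; conversely every consistent antichain $A$ of $P$ yields the cube $C(I_A,A)$, where $I_A$ is the order ideal generated by $A$ and $A = (I_A)_{\max}$. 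On the other hand, by the face description just proved, $C(J,N)$ is a proper face of a strictly larger cube iff $N$ can be enlarged to a strictly larger consistent antichain inside a larger consistent order ideal. Combining these, the inclusion-maximal cubes of $X_P$ correspond exactly to the maximal consistent antichains of $P$.

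The main obstacle is the first verification: one must ensure that the subset of vertices of $C(J,N)$ singled out by $(N_1,N_2)$ is glued in $X_P$ as a genuine subcube of $C(J,N)$, rather than merely sharing a vertex set with it. This relies on reading the clause ``these cubes are naturally glued along their faces according to their labels'' in Definition~\ref{def:cat0_complex} carefully and observing that removing a subset $N_1$ of maximal elements commutes with the cube structure, so that the combinatorial subcube of $C(J,N)$ indexed by $(N_1,N_2)$ is exactly the cube $C(J \setminus N_1,\, N \setminus N_1 \setminus N_2)$ that Definition~\ref{def:cat0_complex} already places in $X_P$.
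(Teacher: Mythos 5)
First, note that the paper itself does not prove this lemma: it is imported verbatim from the cited reference \cite{ardila2012geodesics}, so there is no in-paper proof to compare against and your attempt must stand on its own. Your treatment of the first assertion is correct and is the standard argument: identifying the vertices of $C(J,N)$ with $\{J\setminus S : S\subseteq N\}$, the faces of an $|N|$-cube are indexed by ordered pairs $(N_1,N_2)$ of disjoint subsets of $N$ (each coordinate frozen to ``removed'', frozen to ``kept'', or free), giving $3^{|N|}$ faces whose vertex sets coincide with those of $C(J\setminus N_1,\,N\setminus N_1\setminus N_2)$; your checks that $J\setminus N_1$ remains a consistent order ideal and that $N\setminus N_1\setminus N_2$ consists of maximal elements of it are exactly the right ones. (You should also record explicitly that the ideal $I_A$ generated by a consistent antichain $A$ is itself consistent --- this uses both axioms of Definition~\ref{def:PIP} --- but that is a one-line check.)

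The genuine gap is in the second assertion, in the direction ``maximal cube $\Rightarrow$ maximal consistent antichain.'' You assert that $C(J,N)$ is a proper face of a strictly larger cube \emph{iff} $N$ can be enlarged to a strictly larger consistent antichain, but only the forward implication follows from the face description you proved. For the converse, suppose $N=J_{\max}$ and $N\cup\{v\}$ is a strictly larger consistent antichain: you cannot in general take the larger cube to be $C(J\cup\{v\},\,N\cup\{v\})$, because $J\cup\{v\}$ need not be an order ideal ($v$ may have predecessors outside $J$), and $C(J,N)$ is a face of $C(J',N')$ only when $J'\setminus J\subseteq N'\subseteq J'_{\max}$, which forces $J'\setminus J$ to consist of maximal elements of $J'$. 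The repair is to choose $w$ a \emph{minimal} element of $\{p\in P : p\leq v\}\setminus J$: then $J\cup\{w\}$ is an order ideal, it is consistent (an inconsistency of $w$ with some $p\in J$ would propagate upward, by axiom 2 of Definition~\ref{def:PIP}, to an inconsistency between $v$ and an element of $N$, contradicting consistency of $N\cup\{v\}$), each $n\in N$ stays maximal (if $w>n$ then $v>n$, contradicting incomparability), and $C(J\cup\{w\},\,N\cup\{w\})$ is a strictly larger cube having $C(J,N)$ as the face given by $N_1=\{w\}$, $N_2=\emptyset$. With this, together with the observation that a maximal cube must satisfy $N=J_{\max}$ and hence $J=I_N$ by local finiteness, the claimed correspondence with maximal consistent antichains is complete.
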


We can order the vertices of $X_P$, which by Definition \ref{def:cat0_complex} are the consistent order ideals of $P$, with any linear ordering of the elements of $P$. Now we can use a shortlex $<_{sl}$ ordering to order all the cubes such as: $(I,M) < (I',M')$ if and only if $I <_{sl} I'$ or $(I = I'$ and $M >_{sl} M')$. Note the opposite signs when comparing $I$ and $I'$ or $M$ and $M'$.

In shortlex ordering, the sequences are primarily sorted by cardinality (length) with the shortest sequences first, and sequences of the same length are sorted into lexicographical order from Definition \ref{def:lex_order}.

\bigskip

\noindent {\bf Proof of Theorem \ref{thm:cat0}}

\begin{proof}
Apply the greedy matching $M$ with the cubes ordered by the order above. If $I \neq \emptyset$, then define $p=\max I_{max}$. We will show that if $p\in M$, then $(I,M \setminus p) \rightarrow (I,M)$ (See Figure \ref{fig:cat0pip}). With this we can conclude that every $(I,M)$ is matched except when $I=\emptyset$, since if $I \neq \emptyset$, then $(I,M\setminus p) \rightarrow (I,M)$, if $p\in M$ or $(I,M) \rightarrow (I,M \cup p)$ if $p \notin M$. Therefore $X_P$ is collapsible since it has one critical vertex, the empty set ideal.

Let $\sigma=(I,M \setminus p)$ and $\tau=(I,M)$.

Since $|M|>|M\setminus p|$, then $M>_{sl} M\setminus p$. Therefore $\tau = (I,M) < (I,M \setminus p) = \sigma$. Therefore  $\{\sigma,\tau\}\in H$, by Definition \ref{def:modified_hasse_diagram}.

  We will compare the weight of $\{\sigma, \tau\}$ against the weights of every other possible pair that $\sigma$ or $\tau$ can be matched. The proof is simply using the order above with Lemma \ref{lem:helper} and Theorem \ref{thm:if_map_then_matched}.

\begin{enumerate}
\item Suppose $\tau \rightarrow \tau'$. Since $\tau \prec \tau'$, then $\tau' \setminus \tau = (I \cup v, M)$ for some $v \in P$. We have that $\overline{\{\tau,\tau'\}} =\tau'\setminus\tau=(I \cup v, M)>(I\setminus p,M \setminus p)=\tau \setminus \sigma= \overline{\{\sigma,\tau\}}$.

\item Suppose $\tau' \rightarrow \tau$. Since $\tau' \prec \tau$, then $\tau \setminus \tau' = (I, M\setminus v)$ or $\tau'=(I\setminus v,M \setminus v)$ for some $v \in P$. We have that $\overline{\{\tau',\tau\}} =\tau\setminus\tau'=(I, M \setminus v)>(I\setminus p,M \setminus p)=\overline{\{\sigma,\tau\}}$ or $\overline{\{\tau',\tau\}} =\tau\setminus\tau'=(I \setminus v, M \setminus v)>(I\setminus p,M \setminus p)=\tau \setminus \sigma=\overline{\{\sigma,\tau\}}$, since $p=\max I_{max}$.

\item Suppose $\sigma \rightarrow \sigma'$. Since $\sigma \prec \sigma'$, then $\sigma' \setminus \sigma = (I \cup v, M\setminus p)$ for some $v \in P$. We have that $\overline{\{\sigma,\sigma'\}} =\sigma'\setminus\sigma=(I \cup v, M)>(I\setminus p,M \setminus p)=\tau \setminus \sigma= \overline{\{\sigma,\tau\}}$. 
    
\item Suppose $\sigma' \rightarrow \sigma$. Since $\sigma' \prec \sigma$, then $\sigma \setminus \sigma' = (I, M\setminus p\setminus v)$ some $v \in P$ or $\sigma'=(I\setminus v,M\setminus p \setminus v)$ for some $v \in P$ such that $v \neq p$. We have that $\overline{\{\sigma',\sigma\}} =\sigma\setminus\sigma'=(I, M\setminus p \setminus v)>(I\setminus p,M \setminus p)=\tau \setminus \sigma=\overline{\{\sigma,\tau\}}$ or $\overline{\{\sigma',\sigma\}} =\sigma\setminus\sigma'=(I \setminus v, M \setminus p \setminus v)>(I\setminus p,M \setminus p)=\tau \setminus \sigma=\overline{\{\sigma,\tau\}}$, since $p=\max I_{max}$. By Theorem \ref{thm:if_map_then_matched},  it follows that $\sigma$ is not critical.
\end{enumerate}

In all 4 cases, the weight of $\overline{\{\sigma,\tau\}}$ is less than the weight all other possible matching pair. By Theorem \ref{thm:if_map_then_matched},  it follows that $\sigma \rightarrow \tau$.

\end{proof}

\end{document}